\documentclass[12pt,a4paper]{amsart}

\newtheorem{theo+}              {Theorem}           [section]
\newtheorem{prop+}  [theo+]     {Proposition}
\newtheorem{coro+}  [theo+]     {Corollary}
\newtheorem{lemm+}  [theo+]     {Lemma}
\newtheorem{exam+}  [theo+]     {Example}
\newtheorem{rema+}  [theo+]     {Remark}
\newtheorem{defi+}  [theo+]     {Definition}
\newtheorem{clai+}  [theo+]     {Claim}
\newenvironment{theorem}{\begin{theo+}}{\end{theo+}}
\newenvironment{proposition}{\begin{prop+}}{\end{prop+}}
\newenvironment{corollary}{\begin{coro+}}{\end{coro+}}
\newenvironment{lemma}{\begin{lemm+}}{\end{lemm+}}

\usepackage{amsthm}
\theoremstyle{plain} \theoremstyle{remark}
\newtheorem{remark}{Remark}
\newtheorem{example}{Example}

\def \r{\mbox{${\mathbb R}$}}
\def\E{/\kern-1.0em \equiv }

\evensidemargin  5mm \oddsidemargin  5mm \textwidth  145mm
\textheight 209mm

\linespread{1.1}

\thispagestyle{empty}
\author{Ze-Ping Wang$^{*}$ and Ye-Lin Ou$^{**}$ }

\address{School of Mathematical Sciences,\newline\indent Guizhou
Normal University,\newline\indent Guiyang 550025,\newline\indent
People's Republic of China
\newline\indent E-mail:zpwzpw2012@126.com \;(Wang)\\
\newline
\newline\indent Department of
Mathematics,\newline\indent Texas A $\&$ M University-Commerce,
\newline\indent Commerce TX 75429,\newline\indent USA.\newline\indent
E-mail:yelin.ou@tamuc.edu \;(Ou)}
\thanks{*Supported by the Natural Science Foundation of China (No. 11861022). \\
\indent** Supported by a grant from the Simons Foundation ( 427231,
Ye-Lin Ou).}
\date{11/12/2023}
\begin{document}
\title[Biharmonic Riemannian submersions from a BCV space]{Biharmonic Riemannian submersions from a $3$-dimensional BCV space}

\subjclass{58E20, 53C43} \keywords{Harmonic maps, Biharmonic maps, Riemannian
submersions,  Biharmonic Riemannian
submersions, 3-dimensional BCV spaces.}

\maketitle

\section*{Abstract}
\begin{quote}
{\footnotesize BCV spaces are a family of $3$-dimensional  Riemannian manifolds which include six of Thurston's eight geometries. In this paper, we give a complete classification of proper biharmonic Riemannian submersions from a $3$-dimensional BCV space by proving that such biharmonic maps exist only  in the cases of $H^2\times\r\to \r^2$,\;or  $\widetilde{SL}(2,\r)\to \r^2$. In each of these two cases, we are able to construct a family of infinitely many proper biharmonic Riemannian submersions. Our results on one hand, extend a previous result of the authors which gave a complete classification of proper biharmonic Riemannian submersions from a $3$-dimensional space form, and on the other hand, can be viewed as the dual study of biharmonic surfaces (i.e., biharmonic isometric immersions) in a BCV space studied in some recent literature.}

\end{quote}

\maketitle

\section{Introduction}
All manifolds, maps, and  tensor fields studied in this paper are assumed
to be smooth unless there is an otherwise statement.\\

A harmonic map $\varphi:(M, g)\to (N,
h)$ from a compact Riemannian manifold
$(M, g)$ into another Riemannian manifold $(N, h)$ is a critical point
of the energy functional defined by
\begin{equation}\nonumber
E\left(\varphi,\Omega \right)= \frac{1}{2} {\int}_{\Omega}
\left|{\rm d}\varphi \right|^{2}{\rm d}x.
\end{equation}
So,  it is a solution of the corresponding Euler-Lagrange equation. This
equation is given by the vanishing of the tension field (see, e.g., \cite{BW1})
\begin{equation}\notag
\tau(\varphi)={\rm
Trace}_{g}\nabla {\rm d} \varphi=0.
\end{equation}

A map $\varphi:(M, g)\to (N,
h)$\; between Riemannian manifolds is called {\em biharmonic map} if $\varphi|_{\Omega}$ is  a critical point of the
bienergy
\begin{equation}\nonumber
E^{2}\left(\varphi,\Omega \right)= \frac{1}{2} {\int}_{\Omega}
\left|\tau(\varphi) \right|^{2}{\rm d}x
\end{equation}
for every compact subset $\Omega$ of $M$.  By calculating the first
variation of the functional (see \cite{Ji}),  one concludes that $\varphi$
is  biharmonic  if and only if its bitension field vanishes
identically, i.e.,
\begin{equation}\label{BT1}
\tau^{2}(\varphi):={\rm
Trace}_{g}(\nabla^{\varphi}\nabla^{\varphi}-\nabla^{\varphi}_{\nabla^{M}})\tau(\varphi)
- {\rm Trace}_{g} R^{N}({\rm d}\varphi, \tau(\varphi)){\rm d}\varphi
=0,
\end{equation}
where $R^{N}$ denotes the curvature operator of $(N, h)$ defined by
$$R^{N}(X,Y)Z=
[\nabla^{N}_{X},\nabla^{N}_{Y}]Z-\nabla^{N}_{[X,Y]}Z.$$\\
Clearly, a harmonic map is always a biharmonic map, so we call those biharmonic maps which are not harmonic proper biharmonic maps.\\

For more account on  basic examples and properties of biharmonic maps, some recent progress on biharmonic submanifolds (i.e., biharmonic isometric immersions), biharmonic conformal maps, biharmonic maps into spheres, biharmonic maps with symmetry, Liouville type and unique continuation theorems for biharmonic maps, see the recent book \cite{Ou2} and the vast references therein.\\

 Biharmonic Riemannian submersions were first studied by Oniciuc in \cite{CO}. By using the integrability data of a special orthonormal frame adapted to a Riemannian submersion, we proved in  \cite{WO} that  a Riemannian submersion from a $3$-dimensional space form into a surface is biharmonic if and only if it is harmonic.  In a recent paper \cite{MO},  Akyol and Ou  studied biharmonicity of a general Riemannian submersion and obtained biharmonic equations for Riemannian submersions with one-dimensional fibers and Riemannian submersions with
basic mean curvature vector fields of fibers. In particular, in \cite{MO}, biharmonic Riemannian
submersions from $(n+1)$-dimensional spaces with one-dimensional fibers were studied using
the  integrability data.  In a recent paper \cite{EO}, the authors found many examples of biharmonic Riemannian submersions in their study of generalized harmonic morphisms which are maps between Riemannian manifolds that pull back local harmonic functions to local biharmonic functions.\\

A 3-dimensional Bianchi-Cartan-Vranceeanu  (3-dimensional BCV space for short) (see e.g., \cite{BDI}, \cite{CMOP}) refers to the 3-dimensional Riemannian manifold
$$M^3_{m,\;l}=(\r^{3},g=\frac{dx^2+dy^2}{[1+m(x^2+y^2)]^2}+[dz+\frac{l}{2}\frac{y
dx-x dy}{1+m(x^2+y^2)}]^2),$$
 which include the well-known model spaces $\r^3, S^3, S^2\times\r, H^2\times\r, \widetilde{SL}(2,\r), Nil,$\\ $ SU(2)$.  BCV spaces are important because they include, on one hand,  all 3-dimensional homogeneous metrics with group of isometries of dimension 4 or 6, except for those of constant negative sectional curvature, and on the other hand, six of  Thurston's  the eight 3-dimensional geometries.\\

In this paper, we give a complete classification of proper biharmonic Riemannian submersions from a $3$-dimensional BCV space  by proving that such biharmonic maps exist only  in the cases of $H^2\times\r\to \r^2$,\;or  $\widetilde{SL}(2,\r)\to \r^2$. In each of these two cases, we are able to construct a family of infinitely many proper biharmonic Riemannian submersions. Our results on one hand, extend the results in \cite{WO} where a complete classification of proper biharmonic Riemannian submersions from a $3$-dimensional space form was obtained, and on the other  hand, can be viewed as the dual study of biharmonic surfaces (i.e., biharmonic isometric immersions) in a BCV space studied in \cite{CMO1,CMO2,FMO,Ou4}.

\section{ Preliminaries}

Let $\pi:( M^3 , g)\to(N^2,h)$ be a Riemannian
submersion. Recall that  a local orthonormal frame is an adapted frame to
the Riemannian submersion $\pi$ if the vector fields in the frame
that are tangent to the horizontal distribution are basic, that is,
they are $\pi$-related to a local orthonormal frame in the base
space. Such a frame always exists (cf. e.g., \cite{BW1}). Suppose
 that $\{\alpha_1,\; \alpha_2, \;e_3\}$ is an adapted orthonormal frame
with $e_3$ being vertical. Then, it is a known fact (see \cite{On1})
that $[\alpha_1,e_3]$ and $ [\alpha_2,e_3]$ are vertical and $[\alpha_1,\alpha_2]$ is
$\pi$-related to $[\eta_1, \eta_2]$, where
$\{\eta_1, \eta_2\}$ is an orthonormal frame in the
base space. Suppose that $ [\eta_1,\eta_2]=H_1\eta_1+H_2\eta_2,$
for $H_1, H_2\in C^{\infty}(N)$ and use the notations $h_i=H_i\circ
\pi, i=1, 2$. Then, we obtain
\begin{equation}\label{R1}
\begin{array}{lll}
[\alpha_1,e_3]=\tau_1e_3,\;
[\alpha_2,e_3]=\tau_2e_3,\;
[\alpha_1,\alpha_2]=h_1 \alpha_1+h_2\alpha_2-2\sigma e_3.
\end{array}
\end{equation}
where  $ h_1, h_2, \tau_1,\;\tau_2\;{\rm and}\; \sigma \in C^{\infty}(M)$ are the integrability data of the adapted frame of the Riemannian submersion.\\

Using an adapted frame and the associated integrability data the biharmonicity of a Riemannian submersion  can be described as follows.

\begin{lemma}(see \cite{WO})\label{Lem1}
Let $\pi:(M^3,g)\to(N^2,h)$ be a Riemannian submersion
with the adapted frame $\{e_1,\; e_2, \;e_3\}$ and the integrability
data $ \{f_1,\; f_2,\; \kappa_1,\;\kappa_2\;\sigma\}$. Then, the
Riemannian submersion $\pi$ is biharmonic if and only if
\begin{equation}\label{lem1}
\begin{cases}
-\Delta^{M}\kappa_1-2\sum\limits_{i=1}^{2}f_i e_i(\kappa_2)-\kappa_2\sum\limits_{i=1}^{2}\left(e_i( f_i)
-\kappa_i f_i\right)+\kappa_1\left(-K^{N}+\sum\limits_{i=1}^{2}f_{i}^{2}\right)
=0,\\
-\Delta^{M}\kappa_2+2\sum\limits_{i=1}^{2}f_i e_i(\kappa_1)+\kappa_1\sum\limits_{i=1}^{2}(e_i( f_i)
-\kappa_i f_i)+\kappa_2\left(-K^{N}+\sum\limits_{i=1}^{2}f_{i}^{2}\right)=0,
\end{cases}
\end{equation}
where
$K^{N}=R^{N}_{1212}\circ\pi=e_1(f_2)-e_2(f_1)-f_{1}^{2}-f_{2}^{2}$ is  the Gauss curvature of Riemannian manifold $(N^2,h)$.
\end{lemma}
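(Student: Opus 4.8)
The statement is quoted from \cite{WO}; here is how I would derive it directly from the integrability data \eqref{R1} (with $h_i,\tau_i$ renamed $f_i,\kappa_i$). The plan is to evaluate the bitension field \eqref{BT1} term by term in the adapted frame and read off the two scalar equations from its two horizontal components. First I would record the Levi-Civita connection of $(M^3,g)$ in the frame $\{e_1,e_2,e_3\}$: applying the Koszul formula to \eqref{R1} gives $\nabla^M_{e_1}e_1=-f_1e_2$, $\nabla^M_{e_2}e_2=f_2e_1$, and, crucially, the horizontal part of $\nabla^M_{e_3}e_3$ equals $\kappa_1e_1+\kappa_2e_2$, i.e. the mean curvature vector of the one-dimensional fibers is $\kappa_1e_1+\kappa_2e_2$. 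Since $\pi$ is a Riemannian submersion, $d\pi(e_i)=\eta_i\circ\pi$ are basic, $d\pi(e_3)=0$, and the horizontal parts of $\nabla^M_{e_i}e_i$ project onto $\nabla^N_{\eta_i}\eta_i$; hence the horizontal contributions cancel in $\tau(\pi)=\mathrm{Trace}_g\nabla d\pi$ and only the fiber term survives, giving $\tau(\pi)=-(\kappa_1\,\tilde\eta_1+\kappa_2\,\tilde\eta_2)$, where $\tilde\eta_\alpha=\eta_\alpha\circ\pi$. In particular $\pi$ is harmonic exactly when the fibers are minimal, $\kappa_1=\kappa_2=0$, but biharmonicity requires the full bitension field.

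The main step is the rough-Laplacian part $\mathrm{Trace}_g(\nabla^\pi\nabla^\pi-\nabla^\pi_{\nabla^M})\tau(\pi)$, and the clean bookkeeping device is to observe that on the base surface the Levi-Civita connection merely rotates the orthonormal frame through a single connection $1$-form $\omega$, with $\nabla^N_{\eta_i}\eta_1=-\omega(\eta_i)\eta_2$, $\nabla^N_{\eta_i}\eta_2=\omega(\eta_i)\eta_1$ and $\omega(\eta_i)=H_i$; pulling back, $\omega(e_i)=f_i$ and $\omega(e_3)=0$. Identifying a horizontal section $a^1\tilde\eta_1+a^2\tilde\eta_2$ with the complex function $a^1+ia^2$, the pullback connection becomes $\nabla^\pi_X=X-i\,\omega(X)$, and $\tau(\pi)$ corresponds to $-(\kappa_1+i\kappa_2)$. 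Expanding $\sum_a(\nabla^\pi_{e_a}\nabla^\pi_{e_a}-\nabla^\pi_{\nabla^M_{e_a}e_a})$ on this complex function, and using $\sum_a\omega(\nabla^M_{e_a}e_a)=\kappa_1f_1+\kappa_2f_2$ (again from the connection coefficients of the first step), produces the scalar Laplacian $\Delta^M\kappa_j$ together with a first-order term $-2i\sum_if_ie_i(\cdot)$ and a zeroth-order term $-\big(\sum_if_i^2\big)-i\sum_i(e_i(f_i)-\kappa_if_i)$. Separating real and imaginary parts yields exactly the $\tilde\eta_1$- and $\tilde\eta_2$-components, including the cross terms $\pm2\sum_if_ie_i(\kappa_j)$ and the factor $\kappa_j\sum_i(e_i(f_i)-\kappa_if_i)$ of \eqref{lem1}.

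For the curvature term I would use that $N^2$ is a surface, so $R^N(X,Y)Z=K^N\big(\langle Y,Z\rangle X-\langle X,Z\rangle Y\big)$; the trace over the frame reduces to the two horizontal directions (the $e_3$-term vanishes because $d\pi(e_3)=0$), and substituting $\tau(\pi)=-(\kappa_1\tilde\eta_1+\kappa_2\tilde\eta_2)$ gives $\mathrm{Trace}_gR^N(d\pi,\tau(\pi))d\pi=K^N(\kappa_1\tilde\eta_1+\kappa_2\tilde\eta_2)$. Subtracting this from the rough-Laplacian part and setting the two horizontal components of $\tau^2(\pi)$ to zero produces the system \eqref{lem1}; the stated value $K^N\circ\pi=e_1(f_2)-e_2(f_1)-f_1^2-f_2^2$ is the Gauss equation for the frame $\{\eta_1,\eta_2\}$, obtained from one more Koszul computation on $N^2$.

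The only genuine obstacle is the second step: correctly iterating the pullback covariant derivative of a section written in a non-parallel frame, where errors in the connection terms are easy to make. Encoding the horizontal part of $\tau(\pi)$ as a single $\omega$-twisted complex function, as above, is what makes the first-order cross terms and the combination $\sum_i(e_i(f_i)-\kappa_if_i)$ appear transparently and keeps the sign bookkeeping under control; everything else reduces to routine Koszul-formula computation.
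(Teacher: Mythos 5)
Your derivation is correct: the tension field computation, the evaluation of the rough Laplacian and of the curvature term in the adapted frame, and the resulting real/imaginary split all reproduce system \eqref{lem1} exactly (I checked the signs, including the combination $\sum_i(e_i(f_i)-\kappa_if_i)$ coming from $\sum_a\omega(\nabla^M_{e_a}e_a)=\kappa_1f_1+\kappa_2f_2$, and the surface curvature identity $\mathrm{Trace}_gR^N(d\pi,\tau)d\pi=-K^N\tau(\pi)$). Note that the paper itself gives no proof of this lemma — it is quoted from \cite{WO} — and your argument is essentially the same direct frame computation carried out there, with the $\omega$-twisted complex-function encoding serving only as a (pleasant) bookkeeping device rather than a genuinely different route.
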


One can easily check that on a BCV space $M^3_{m,\;l}=(\r^{3},g=\frac{dx^2+dy^2}{[1+m(x^2+y^2)]^2}+[dz+\frac{l}{2}\frac{y
dx-x dy}{1+m(x^2+y^2)}]^2)$ we have a globally defined  orthonormal frame
\begin{equation}\label{X190}\notag
E_{1}=F\frac{\partial}{\partial
x}-\frac{ly}{2}\frac{\partial}{\partial z},\;E_{2}=F
\frac{\partial}{\partial y}+\frac{lx}{2}\frac{\partial}{\partial
z},\;E_{3}=\frac{\partial}{\partial z},
\end{equation}
where $F=1+m(x^2+y^2)$.  \\
A direct computation gives the Lie brackets
\begin{equation}\label{Lie}\notag
[E_1,E_2]=-2myE_{1}+2mxE_{2}+lE_{3},\;\; all\;\;
other\;\;[E_i,E_j]=0,\;i,j=1,2,3,
\end{equation}
and  the Levi-Civita connection of  BCV 3-spaces as
\begin{equation}\label{BCV1}
\begin{cases}
\nabla_{E_{1}}E_{1}=2myE_{2},\;\;\nabla_{E_{2}}E_{2}=2mxE_{1},\;\;\\
\nabla_{E_{1}}E_{2}=-2myE_{1}+\frac{l}{2}E_{3},\;\;
\nabla_{E_{2}}E_{1}=-2mxE_{2}-\frac{l}{2}E_{3},\;\;\\
\nabla_{E_{3}}E_{1}=\nabla_{E_{1}}E_{3}=-\frac{l}{2}E_{2},\;\;\nabla_{E_{3}}E_{2}=\nabla_{E_{2}}E_{3}=\frac{l}{2}E_{1},\;\;\\
\;\; all \;\; other\;\;
\nabla_{E_i}E_j=0,\;i,j=1,2,3.\;\;\\
\end{cases}
\end{equation}
One can further compute the components of the
Riemannian curvature to obtain
\begin{equation}\label{BCV2}
\begin{array}{lll}
 R_{1212}=g(R(E_{1},E_{2})E_{2},E_{1})=4m-\frac{3l^2}{4},\\
R_{1313}=g(R(E_{1},E_{3})E_{3},E_{1})=R_{2323}=g(R(E_{2},E_{3})E_{3},E_{2})=\frac{l^2}{4},\\
\;\; all \;\; other\;\;R_{ijkl}=g(R(E_{k},E_{l})E_{j},E_{i})=0,\;i,j,k,l=1,2,3.
\end{array}
\end{equation}

The key ingredient in the proofs of our main theorems is the proof of the  existence of certain preferred orthonormal frame $\{e_1, e_2, e_3\}$ which has ``nice" relationship to the natural orthonormal frame $\{E_1, E_2, E_3\}$ on a BCV space.

Note that there are many orthonormal frames $\{e_1,
e_2, \;e_3\}$ with $e_3$ vertical to a Riemannian
submersion $\pi:(M^3,g) \to (N^2,h)$ which are not adapted frames since $e_1, e_2$ may not be basic. We call such an orthonormal frame a natural orthonormal frame. We will first compute the ( generalized) integrability data of a natural orthonormal frame using their relationship with an adapted frame.

\begin{proposition}\label{Pro2}
Let $\pi:(M^3,g) \to (N^2,h)$ be a Riemannian
submersion and  $\{\alpha_1, \alpha_2, e_3\}$ be an adapted  frame with vertical $e_3$  and the  integrability data $\{h_1, h_2,\tau_1,\tau_2, \sigma\} $.
Then, for any natural orthonormal frame $\{e_1=a\alpha_1+b\alpha_2,
e_2=-b\alpha_1+a\alpha_2, \;e_3\}$ on $ M^3$, we have
\begin{equation}\label{PR0}
\begin{array}{lll}
[e_1,e_3]=f_{3}e_2+\kappa_1e_3,\;
[e_2,e_3]=-f_{3}e_1+\kappa_2e_3,\;
[e_1,e_2]=f_1 e_1+f_2e_2-2\sigma e_3,
\end{array}
\end{equation}
and
\begin{equation}\label{PR1}
\begin{cases}
f_1=-ae_1(b)+be_1(a)+bh_2+ah_1,\\
 f_2=-ae_2(b)+be_2(a)+ah_2-bh_1,\\
f_{3}=be_3(a)-ae_3(b),\\
 \kappa_1=a\tau_1+b\tau_2,\;
 \kappa_2=-b\tau_1+a\tau_2,
\end{cases}
\end{equation}
where $a, b, f_{3},\;f_1, f_2, \kappa_1,\;\kappa_2,\;{\rm and}\;\sigma
\in C^{\infty}(M)$. Furthermore, the  orthonormal frame $\{e_1,\; e_2, \;e_3\}$ is an adapted frame  if and only if $f_{3}=0$.
\end{proposition}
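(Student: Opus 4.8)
The plan is to obtain all four brackets in \eqref{PR0} together with the formulas \eqref{PR1} by a direct computation from the defining relations \eqref{R1} of the adapted frame, using only bilinearity of the Lie bracket and the product rule $[fX,gY]=fg[X,Y]+f(Xg)Y-g(Yf)X$. The one structural fact I would record at the outset is that, since $\{\alpha_1,\alpha_2,e_3\}$ and $\{e_1,e_2,e_3\}$ are both orthonormal with the common vertical vector $e_3$, the coefficients satisfy $a^2+b^2=1$; hence the horizontal rotation is invertible with inverse $\alpha_1=ae_1-be_2$ and $\alpha_2=be_1+ae_2$, and differentiating $a^2+b^2=1$ along any frame direction $Y$ gives the identity $a\,Y(a)+b\,Y(b)=0$. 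These two observations are exactly what make the messy first-order terms collapse.

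First I would compute $[e_1,e_3]$ and $[e_2,e_3]$. Expanding $[a\alpha_1+b\alpha_2,e_3]$ by the product rule and substituting $[\alpha_i,e_3]=\tau_i e_3$ from \eqref{R1} produces a vertical part $(a\tau_1+b\tau_2)e_3$ together with a horizontal part $-e_3(a)\alpha_1-e_3(b)\alpha_2$. Rewriting this horizontal part in the basis $\{e_1,e_2\}$ through the inverse relations and invoking $a\,e_3(a)+b\,e_3(b)=0$ annihilates the $e_1$-component and leaves precisely $f_3e_2$ with $f_3=be_3(a)-ae_3(b)$, while the vertical part is $\kappa_1e_3$ with $\kappa_1=a\tau_1+b\tau_2$. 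The computation of $[e_2,e_3]$ is entirely parallel and yields $\kappa_2=-b\tau_1+a\tau_2$.

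The main work is $[e_1,e_2]$. Expanding $[a\alpha_1+b\alpha_2,-b\alpha_1+a\alpha_2]$ produces four product-rule terms; the genuinely second-order piece collects as $(a^2+b^2)[\alpha_1,\alpha_2]=[\alpha_1,\alpha_2]=h_1\alpha_1+h_2\alpha_2-2\sigma e_3$, which already supplies the $-2\sigma e_3$ and feeds the $h_i$ into the horizontal coefficients. The remaining first-order terms, after repeated use of $a\,\alpha_i(a)+b\,\alpha_i(b)=0$, reduce to a horizontal field $P\alpha_1+Q\alpha_2$; converting to the basis $\{e_1,e_2\}$ and matching $e_i(a),e_i(b)$ against $\alpha_i(a),\alpha_i(b)$ via $e_1=a\alpha_1+b\alpha_2$, $e_2=-b\alpha_1+a\alpha_2$ is the only delicate bookkeeping step, and is where I expect the main risk of a sign or index slip. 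This is the step I would carry out most carefully; the outcome is $[e_1,e_2]=f_1e_1+f_2e_2-2\sigma e_3$ with $f_1,f_2$ exactly as in \eqref{PR1}.

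Finally, for the characterization of adaptedness I would use the standard fact for Riemannian submersions that a horizontal vector field $X$ is basic if and only if $[X,V]$ is vertical for every vertical $V$; in fibered coordinates, where the vertical distribution is spanned by $\partial_t$ and $e_3$ is a nonzero multiple of $\partial_t$, this reduces to the single condition $[X,e_3]\in\mathrm{span}\{e_3\}$. Applying this to $e_1$ and $e_2$ and reading off \eqref{PR0}, both $[e_1,e_3]=f_3e_2+\kappa_1e_3$ and $[e_2,e_3]=-f_3e_1+\kappa_2e_3$ are vertical precisely when $f_3=0$, so $\{e_1,e_2,e_3\}$ is an adapted frame if and only if $f_3=0$.
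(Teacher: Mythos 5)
Your computation of the three brackets is correct and follows the same route as the paper: expand $[e_i,e_j]$ by bilinearity and the product rule, substitute the structure equations of the adapted frame, and collapse the first-order terms using $a\,Y(a)+b\,Y(b)=0$; the intermediate expressions you describe (the horizontal remainder $-e_3(a)\alpha_1-e_3(b)\alpha_2$ for $[e_1,e_3]$, and the collection $(a^2+b^2)[\alpha_1,\alpha_2]$ for $[e_1,e_2]$) match the paper's line by line. Where you genuinely diverge is the final characterization of adaptedness. The paper splits into cases: if $ab=0$ the frame $\{e_1,e_2\}$ is a rotation of $\{\alpha_1,\alpha_2\}$ by $\pi/2$ or $\pi$, hence basic; otherwise it rewrites $f_3=0$ as $e_3\!\left(\frac{a}{b}\right)=0$, interprets this as constancy of the angle between $e_i$ and $\alpha_1$ along the fibers, and invokes Lemma 1.2 of Escobales to conclude that $e_1,e_2$ are basic. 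You instead use the standard bracket criterion: a horizontal vector field $X$ is basic if and only if $[X,V]$ is vertical for every vertical $V$, which for one-dimensional fibers reduces to $[X,e_3]\in\mathrm{span}\{e_3\}$; reading this off \eqref{PR0} gives both directions of the equivalence at once. Your route is cleaner on two counts: it avoids the case split on $ab$ (which in the paper is not pointwise well-posed as stated), and it delivers the ``only if'' direction explicitly, which the paper leaves implicit (it follows from O'Neill's fact that brackets of basic with vertical fields are vertical — i.e., from half of the criterion you invoke). The paper's route, on the other hand, stays closer to the geometric picture of fiberwise rotation angles and leans on a citable lemma rather than a coordinate verification of the converse of the bracket criterion. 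Do make sure you justify that converse (in fibered coordinates $\pi(x,t)=x$, the condition $[X,\partial_t]$ vertical forces the $\partial_{x^i}$-components of $X$ to be fiberwise constant), since that is the one ingredient you use that the paper does not.
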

\begin{proof}
 Since  $\{\alpha_1, \alpha_2\}$  and $\{e_1,\; e_2\}$ are two orthonormal bases of the horizontal distribution, we have
\begin{equation}\label{JR2}
e_1=a\alpha_1+b\alpha_2,\;e_2=-b\alpha_1+a\alpha_2,
\end{equation}
where $a, b\in C^\infty(M)$, and $a^2+b^2=1$.\\
It follows that $a\alpha_i (a)+b\alpha_i (b)=\frac{1}{2}\alpha_i(a^2+b^2)=0,\; i=1,2$, and $a e_3 (a)+be_3 (b)=0$.\\
Using Eqs. (\ref{R1}) and  (\ref{JR2}), together with $a\alpha_i (a)+b\alpha_i (b)=0$,  a straightforward computation yields
\begin{equation}\label{JR3}\notag
\begin{array}{lll}
[e_1,e_2]=[a\alpha_1+b\alpha_2,-b\alpha_1+a\alpha_2] \\
=\left(-a\alpha_1(b)+b\alpha_1(a)\right)\alpha_1
+\left(b\alpha_2(a)-a\alpha_2(b)\right)\alpha_2+[\alpha_1,\alpha_2]\\
=\left(-a\alpha_1(b)+b\alpha_1(a)+h_1\right)\alpha_1
+\left(b\alpha_2(a)-a\alpha_2(b)+h_2\right)\alpha_2-2\sigma e_3.
\end{array}
\end{equation}

A further computation gives
\begin{equation}\label{JR4}\notag
\begin{array}{lll}
\langle[e_1,e_2],e_1\rangle=\langle[e_1,e_2],a\alpha_1+b\alpha_2\rangle
=-ae_1(b)+be_1(a)+bh_2+ah_1,\\
\langle[e_1,e_2],e_2\rangle=\langle[e_1,e_2],-b\alpha_1+a\alpha_2\rangle
= -ae_2(b)+be_2(a)+ah_2-bh_1,\\
\langle[e_1,e_2],e_3\rangle=-2\sigma,
\end{array}
\end{equation}
which gives
\begin{equation}\label{JR7}\notag
\begin{array}{lll}
[e_1,e_2]=f_1e_1+f_2e_2-2\sigma e_3,
\end{array}
\end{equation}
where $f_1=-ae_1(b)+be_1(a)+bh_2+ah_1,\;f_2=-ae_2(b)+be_2(a)+ah_2-bh_1$.\\
A similar calculation using Eqs. (\ref{R1}), (\ref{JR2}) gives
\begin{equation}\label{JR8}
\begin{array}{lll}
[e_1,e_3]=[a\alpha_1+b\alpha_2,e_3]
=-e_3(a)\alpha_1-e_3(b)\alpha_2+(a\tau_1+b\tau_2)e_3,\\

[e_2,e_3]=[-b\alpha_1+a\alpha_2,e_3]
=e_3(b)\alpha_1-e_3(a)\alpha_2+(-b\tau_1+a\tau_2)e_3.
\end{array}
\end{equation}
A further computation using (\ref{JR8}), together with $a e_3 (a)+be_3 (b)=0$,  yields
\begin{equation}\label{JR10}\notag
\begin{array}{lll}
\langle[e_1,e_3],e_1\rangle=\langle[e_1,e_3],a\alpha_1+b\alpha_2\rangle=-(ae_3(a)+be_3(b))=0,\\

\langle[e_1,e_3],e_2\rangle=\langle[e_1,e_3],-b\alpha_1+a\alpha_2\rangle=be_3(a)-ae_3(b),\\

\langle[e_1,e_3],e_3\rangle=a\tau_1+b\tau_2,\;
\langle[e_2,e_3],e_3\rangle=-b\tau_1+a\tau_2,\\

\langle[e_2,e_3],e_1\rangle=\langle[e_2,e_3],a\alpha_1+b\alpha_2\rangle=-(-ae_3(b)+be_3(a)),\\

\langle[e_2,e_3],e_2\rangle=\langle[e_2,e_3],-b\alpha_1+a\alpha_2\rangle=-(be_3(b)+ae_3(a))=0,

\end{array}
\end{equation}
from which, we obtain
\begin{equation}\label{JR16}\notag
[e_1,e_3]=f_{3}e_2+\kappa_1e_3,\;[e_2,e_3]=-f_{3}e_1+\kappa_2e_3,
\end{equation}
where $f_{3}=be_3(a)-ae_3(b), \kappa_1=a\tau_1+b\tau_2, \kappa_2=-b\tau_1+a\tau_2$.
Thus,  we get (\ref{PR0}).\\
For the last statement, we note that if $ab=0$, then $\{e_1, e_2\}$ is obtained from $\{\alpha_1, \alpha_2\}$ by a rotation of $\pi/2$ or $\pi$ and hence they are basic. Otherwise, it follows from $f_{3}=be_3(a)-ae_3(b)$ that $f_{3}=0$ is equivalent to $e_3\left(\frac{a}{b}\right)=0$. This means the angle between $e_i$ and $\alpha_1$ is constant along the fibers, so Lemma 1.2 in \cite{Es} applies to conclude that  both $e_1,e_2$  are basic and hence the frame $\{e_1,\; e_2, e_3\}$ is an adapted frame. Thus, we complete the proof of the proposition.
\end{proof}

\begin{remark}\label{rem0}
Let $\pi:(M^3,g) \to (N^2,h)$ be a Riemannian submersion with an orthonormal frame $\{e_1,\;e_2, \;e_3\}$ and $e_3$ being  vertical. A straightforward computation using (\ref{PR0}) and Koszul formula gives
\begin{equation}\label{R2}
\begin{cases}
\nabla_{e_{1}} e_{1}=-f_1e_2,\;\;\nabla_{e_{1}} e_{2}=f_1
e_1-\sigma e_{3},\;\;\nabla_{e_{1}} e_{3}=\sigma
e_{2},\\  \nabla_{e_{2}} e_{1}=-f_2 e_{2}+\sigma
e_3,\;\;\nabla_{e_{2}} e_{2}=f_2 e_{1}, \;\;\nabla_{e_{2}}
e_{3}=-\sigma e_{1},\\
\nabla_{e_{3}}e_{1}=-\kappa_1e_{3}+(\sigma-f_{3}) e_{2}, \nabla_{e_{3}} e_{2}= -(\sigma-f_{3})
e_{1}-\kappa_2 e_3, \nabla_{e_{3}} e_{3}=\kappa_1 e_{1}+\kappa_2e_2.
\end{cases}
\end{equation}
One can  check  that using (\ref{R2}) and applying the Jacobi identities to the frame $\{e_1, e_2, e_3\}$ yield
 \begin{equation}\label{Jac}
 \begin{array}{lll}
e_3(f_1)+(\kappa_1+f_2)f_{3}-e_1(f_{3})=0,\;
e_3(f_2)+(\kappa_2-f_1)f_{3}-e_2(f_{3})=0\\
2 e_3(\sigma)+\kappa_1f_1+\kappa_2f_2+e_2(\kappa_1)-e_1(\kappa_2)=0,
\end{array}
\end{equation}
and the terms of the curvature tensor
\begin{equation}\label{RC}
\begin{cases}
R^{M}(e_1,e_3,e_1,e_2)=-e_1(\sigma)+2\kappa_1\sigma,\\
R^{M}(e_1,e_3,e_1,e_3)=e_1(\kappa_1)+\sigma^2-\kappa_{1}^2+\kappa_2f_1,\;\\
R^{M}(e_1,e_3,e_2,e_3)=e_1(\kappa_2)-e_3(\sigma)-\kappa_{1}f_{1}-\kappa_1\kappa_2,\;\\
R^{M}(e_1,e_2,e_1,e_2)=e_1(f_2)-e_2(f_1)-f_{1}^{2}-f_{2}^{2}+2f_{3}\sigma-3\sigma^2,\\
R^{M}(e_1,e_2,e_2,e_3)=-e_2(\sigma)+2\kappa_2\sigma,\\
R^{M}(e_2,e_3,e_1,e_3)=e_2(\kappa_{1})+e_3(\sigma)+\kappa_2 f_2-\kappa_1 \kappa_2,\\
R^{M}(e_2,e_3,e_2,e_3)=\sigma^{2}+e_2(\kappa_2)-\kappa_1f_2- \kappa_2^2.
\end{cases}
\end{equation}
By the 4th equation of (\ref{RC}) and O'Neill curvature formula for a Riemannian submersion \cite{On1} we have the Gauss curvature of the base space  as

\begin{equation}\label{GCB}
K^N=e_1(f_2)-e_2(f_1)-f_1^2-f_2^2+2f_{3}\sigma.
\end{equation}
Since the Gauss curvature of the space $K^N=R^N_{1212}\circ\pi$, then  it is constant along the fibers, that is
\begin{equation}\label{GCB0}
e_3(K^N)=e_3[e_1(f_2)-e_2(f_1)-f_1^2-f_2^2+2f_{3}\sigma]=0.
\end{equation}

If $f_{3}=0$, i.e., the orthonormal  frame $\{e_1,\; e_2, \;e_3\}$ is an adapted frame, then the
Gauss curvature of the base space turns into

\begin{equation}\label{GCB1}
K^N=e_1(f_2)-e_2(f_1)-f_1^2-f_2^2.
\end{equation}
\end{remark}

To further understand the properties of the orthonormal frame $\{e_1,
e_2, \;e_3\}$ with $e_3$ vertical to a Riemannian
submersion $\pi:(M^3,g) \to (N^2,h)$, we need the following results.

\begin{proposition}\label{Pro3}
(see, e.g.,  \cite{A,On2}) Let $\pi:(M,g)\longrightarrow (N,h)$ be a
Riemannian submersion. Let $\gamma:I\longrightarrow N$ be a geodesic
in $N$ where $I$ is an open interval around 0 in $\r$. Let
$\gamma(0)=b$ and $ p\in \pi^{-1}(b)$ be an arbitrary point in the
fibre above $b$. If $\widetilde{\gamma}$ is the horizontal lift of
$\gamma$ passing through $p$, then $\widetilde{\gamma}$ is also a
geodesic in $M$.

\end{proposition}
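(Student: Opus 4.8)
The plan is to establish the geodesic equation $\nabla^M_{\dot{\widetilde\gamma}}\dot{\widetilde\gamma}=0$ for the horizontal lift $\widetilde\gamma$ directly, by decomposing $\nabla^M_{\dot{\widetilde\gamma}}\dot{\widetilde\gamma}$ into its horizontal and vertical parts and showing that each vanishes. The main tool is O'Neill's structural equations for a Riemannian submersion \cite{On1}, which relate the Levi-Civita connections $\nabla^M$ and $\nabla^N$ through the horizontal and vertical projections, denoted $\mathcal{H}$ and $\mathcal{V}$.

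First I would promote the velocity of $\widetilde\gamma$ to a vector field. Since $\gamma$ is a geodesic, I can choose, on a neighbourhood of the image of $\gamma$, a vector field $V$ on $N$ having $\gamma$ as an integral curve and satisfying $\nabla^N_V V=0$ along $\gamma$. Let $X$ denote the basic horizontal lift of $V$, that is, the unique horizontal field on $M$ that is $\pi$-related to $V$. Because $X$ is $\pi$-related to $V$, the projection $\pi$ carries integral curves of $X$ to integral curves of $V$; the integral curve of $X$ through $p$ is horizontal and projects onto $\gamma$, so by uniqueness of horizontal lifts of curves it coincides with $\widetilde\gamma$. In particular $\dot{\widetilde\gamma}(t)=X_{\widetilde\gamma(t)}$, and it suffices to prove $\nabla^M_X X=0$ along $\widetilde\gamma$.

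For the horizontal component, O'Neill's fundamental equation gives, for basic fields, that $\mathcal{H}(\nabla^M_X X)$ is the horizontal lift of $\nabla^N_V V$; along $\gamma$ this vanishes because $\gamma$ is a geodesic. For the vertical component, O'Neill's integrability tensor $A$ satisfies $\mathcal{V}(\nabla^M_X X)=A_X X=\tfrac12\,\mathcal{V}[X,X]=0$, using the skew-symmetry $A_X Y=-A_Y X$ on horizontal arguments. Hence $\nabla^M_X X=0$ along $\widetilde\gamma$, and therefore $\widetilde\gamma$ is a geodesic in $M$.

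The step I expect to demand the most care is the identification of $\dot{\widetilde\gamma}$ with the basic field $X$ along the lift: one must verify that the integral curve of $X$ through $p$ is exactly the horizontal lift $\widetilde\gamma$, which rests on the $\pi$-relatedness of $X$ and $V$ together with uniqueness of horizontal lifts. Once the velocity field is correctly realised as a basic field, the vanishing of both components follows immediately from the O'Neill formulas, the geodesic hypothesis on $\gamma$, and the antisymmetry of $A$.
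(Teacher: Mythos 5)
The paper does not prove this proposition at all; it is quoted from the cited references \cite{A,On2}, so there is no internal proof to compare against. Your argument is correct and is essentially the classical O'Neill proof contained in those references: you realise $\dot{\widetilde{\gamma}}$ as the basic lift $X$ of a local extension $V$ of $\dot{\gamma}$, then kill the horizontal part of $\nabla^M_X X$ via O'Neill's fundamental equation together with $\nabla^N_V V=0$ along $\gamma$, and the vertical part via $A_X X=0$ (skew-symmetry of the integrability tensor), and the identification of the integral curve of $X$ through $p$ with $\widetilde{\gamma}$ is handled correctly by $\pi$-relatedness plus uniqueness of horizontal lifts. The only case you pass over silently is a constant geodesic $\gamma$, whose velocity cannot be extended to a (nonvanishing) vector field; this is harmless, since its horizontal lift is a constant curve and hence trivially a geodesic.
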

\begin{corollary}\label{Coro3}  (see, e.g., \cite{A,On2})
 Let $\pi:(M,g)\to (N,h)$ be a Riemannian
submersion. If $\gamma$ is a geodesic in $M$ which is horizontal at
a point, then $\gamma$ is horizontal throughout and $\pi\circ
\gamma$ is a geodesic in $N$.
\end{corollary}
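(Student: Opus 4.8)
The plan is to reduce the statement to Proposition \ref{Pro3} together with the uniqueness of geodesics with prescribed initial data. Suppose $\gamma$ is a geodesic in $M$ that is horizontal at the parameter value $t=0$ (after relabeling the parameter if necessary), and set $p=\gamma(0)$, $b=\pi(p)$, and $X=\dot\gamma(0)$, which is horizontal by hypothesis. First I would project the initial velocity: put $v=\mathrm{d}\pi_p(X)\in T_bN$ and let $\gamma_1:I\to N$ be the unique geodesic in $N$ with $\gamma_1(0)=b$ and $\dot\gamma_1(0)=v$. By Proposition \ref{Pro3}, the horizontal lift $\widetilde{\gamma_1}$ of $\gamma_1$ passing through $p$ is a geodesic in $M$.

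The key step is to compare $\gamma$ and $\widetilde{\gamma_1}$ at $t=0$. Both pass through $p$. The initial velocity $\dot{\widetilde{\gamma_1}}(0)$ is, by the definition of the horizontal lift, the unique horizontal vector at $p$ with $\mathrm{d}\pi_p\bigl(\dot{\widetilde{\gamma_1}}(0)\bigr)=\dot\gamma_1(0)=v$. Since $X$ is itself horizontal and satisfies $\mathrm{d}\pi_p(X)=v$, the uniqueness of the horizontal lift of a tangent vector forces $\dot{\widetilde{\gamma_1}}(0)=X=\dot\gamma(0)$. Thus $\gamma$ and $\widetilde{\gamma_1}$ are two geodesics in $M$ sharing the same initial point and the same initial velocity.

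Finally I would invoke the existence-and-uniqueness theorem for the geodesic ordinary differential equation to conclude that $\gamma$ and $\widetilde{\gamma_1}$ coincide near $0$, and hence on the whole connected domain of $\gamma$. Because $\widetilde{\gamma_1}$ is a horizontal lift, it is horizontal at every parameter value, so $\gamma$ is horizontal throughout; and $\pi\circ\gamma=\pi\circ\widetilde{\gamma_1}=\gamma_1$ is a geodesic in $N$ by construction, which gives both assertions.

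The only mildly delicate point, and the step I would flag as the main obstacle, is the \emph{global} matching of the two geodesics: one must ensure the equality propagates over the entire (maximal) interval of $\gamma$ rather than only over a neighborhood of $0$. This follows from a standard connectedness argument, since the set of parameters at which the two geodesics agree together with their velocities is open, by local uniqueness for the geodesic equation, and closed, by continuity of $\gamma$, $\widetilde{\gamma_1}$ and their derivatives; hence it is the whole connected domain.
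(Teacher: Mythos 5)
The paper itself offers no proof of this corollary: it is quoted from the literature (Ranjan \cite{A}, O'Neill \cite{On2}), so there is no in-paper argument to compare against. Your proposal is the standard proof of this standard fact, and its core reduction is exactly right: project the initial velocity, apply Proposition \ref{Pro3} to the horizontal lift $\widetilde{\gamma_1}$ of the resulting geodesic $\gamma_1$ of $N$, identify the initial conditions of $\gamma$ and $\widetilde{\gamma_1}$ using that $\mathrm{d}\pi_p$ restricted to the horizontal space is an isomorphism, and invoke uniqueness for the geodesic ODE.

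The one place your write-up does not close is the step you yourself flagged. Your open-and-closed argument is run on the set $S=\{t:\gamma(t)=\widetilde{\gamma_1}(t),\ \dot\gamma(t)=\dot{\widetilde{\gamma_1}}(t)\}$, which is a subset of the \emph{common} domain of $\gamma$ and $\widetilde{\gamma_1}$; connectedness then gives $S=\mathrm{dom}(\gamma)\cap\mathrm{dom}(\widetilde{\gamma_1})$, not $S\supseteq\mathrm{dom}(\gamma)$. A priori the horizontal lift $\widetilde{\gamma_1}$ (or the maximal geodesic $\gamma_1$ itself) could be defined on a shorter interval than $\gamma$, and then nothing yet says $\gamma$ remains horizontal beyond that interval; note that you cannot appeal to the maximal geodesic extension of $\widetilde{\gamma_1}$, because that extension is $\gamma$ itself, and whether it stays horizontal is precisely what is being proved. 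The standard repair is to run the connectedness argument instead on the set $A=\{t:\dot\gamma(t)\ \text{is horizontal}\}$: $A$ is closed because the horizontal vectors form a closed subset of $TM$ and $\dot\gamma$ is continuous; $A$ is open because at any $t_0\in A$ your local construction (project $\dot\gamma(t_0)$, take the geodesic of $N$ it determines, lift it horizontally through $\gamma(t_0)$, and apply Proposition \ref{Pro3} plus ODE uniqueness) can be restarted at $t_0$, showing $\gamma$ is horizontal on a neighborhood of $t_0$; since $A$ is nonempty and the domain is an interval, $A$ is the whole domain, and then $\pi\circ\gamma$ is locally, hence globally, a geodesic of $N$. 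With that modification your proof is complete.
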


\begin{proposition}\label{Pro4}(see, e.g., \cite{Do,MH})
Let $\gamma:I\to(N^2,h=du^2+e^{2\lambda(u,v)}dv^2)$ be a geodesic curve with $\bar{\theta}$ being the angle between the geodesic curve $\gamma$ and  $u$-curves. Then, we have
\begin{equation}\notag
\frac{d\bar{\theta}}{dv}=-\lambda_u e^{\lambda}.
\end{equation}
Furthermore, if denote by $\eta_1=\frac{\partial}{\partial u}, \;\eta_2=e^{-\lambda}\frac{\partial}{\partial v}$ an orthonormal
frame on $(N^2,h)$, then
\begin{equation}\label{Z0}\notag
\eta_2(\bar{\theta})=-\lambda_u.
\end{equation}
\end{proposition}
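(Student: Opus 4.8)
The plan is to derive both formulas from the geodesic equation of the metric $h=du^2+e^{2\lambda(u,v)}dv^2$ after expressing the velocity of $\gamma$ in the orthonormal frame $\{\eta_1,\eta_2\}$. First I would parametrize $\gamma$ by arc length $t$ and write its unit tangent as $\gamma'=\cos\bar\theta\,\eta_1+\sin\bar\theta\,\eta_2$; reading off the components in the coordinate frame then gives $u'=\cos\bar\theta$ and $v'=e^{-\lambda}\sin\bar\theta$, the two relations that will be substituted into the geodesic equation.

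Next I would compute the Christoffel symbols of $h$. Since the metric is diagonal with $g_{uu}=1$ and $g_{vv}=e^{2\lambda}$, the only nonvanishing symbols are $\Gamma^{u}_{vv}=-\lambda_u e^{2\lambda}$, $\Gamma^{v}_{uv}=\lambda_u$, and $\Gamma^{v}_{vv}=\lambda_v$. The $u$-component of the geodesic equation then reduces to $u''=\lambda_u e^{2\lambda}(v')^2$. Substituting $u'=\cos\bar\theta$ (so that $u''=-\sin\bar\theta\,\tfrac{d\bar\theta}{dt}$) together with $(v')^2=e^{-2\lambda}\sin^2\bar\theta$, the factor $e^{2\lambda}$ cancels, and dividing by $-\sin\bar\theta$ I obtain the clean relation $\tfrac{d\bar\theta}{dt}=-\lambda_u\sin\bar\theta$. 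Dividing this by $\tfrac{dv}{dt}=v'=e^{-\lambda}\sin\bar\theta$ makes the remaining $\sin\bar\theta$ factors cancel and yields the first formula $\tfrac{d\bar\theta}{dv}=-\lambda_u e^{\lambda}$.

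The second formula then follows from the first together with the frame relation $\eta_2=e^{-\lambda}\tfrac{\partial}{\partial v}$: since differentiating along $\eta_2$ rescales the $v$-derivative by $e^{-\lambda}$, we get $\eta_2(\bar\theta)=e^{-\lambda}\tfrac{d\bar\theta}{dv}=-\lambda_u$. What makes this step legitimate is that the right-hand side of the first formula is independent of the particular geodesic, the angle $\bar\theta$ having dropped out entirely; hence $\tfrac{d\bar\theta}{dv}$ depends only on the base point and may be treated as the $v$-derivative of the angle regarded as a function of the $v$-coordinate along $\gamma$.

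I expect the main obstacle to be precisely this last step, namely interpreting $\eta_2(\bar\theta)$ correctly and justifying the passage from the total derivative $\tfrac{d\bar\theta}{dv}$ along $\gamma$ to the frame derivative. One should also dispose separately of the degenerate directions $\sin\bar\theta=0$, where $\gamma$ is a $u$-curve and the statement about $\tfrac{d\bar\theta}{dv}$ is vacuous. The Christoffel-symbol computation and the geodesic-equation manipulation are otherwise routine, so the conceptual content is concentrated in the angle bookkeeping and the frame conversion.
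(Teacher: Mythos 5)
Your proof is correct. Note first that the paper itself offers no proof of this proposition --- it is quoted from the textbooks \cite{Do,MH}, where it is usually obtained from Liouville's formula for geodesic curvature in an orthogonal parametrization, $k_g=\frac{d\bar{\theta}}{ds}+\frac{1}{2\sqrt{EG}}\bigl(G_u\frac{dv}{ds}-E_v\frac{du}{ds}\bigr)$: with $E=1$, $G=e^{2\lambda}$ and $k_g=0$ this gives $\frac{d\bar{\theta}}{ds}=-\lambda_ue^{\lambda}\frac{dv}{ds}$ at once. Your route --- writing $\gamma'=\cos\bar{\theta}\,\eta_1+\sin\bar{\theta}\,\eta_2$, computing the Christoffel symbols, and using only the $u$-component of the geodesic equation --- is an elementary, self-contained substitute, and every step checks out: $\Gamma^u_{vv}=-\lambda_ue^{2\lambda}$ gives $u''=\lambda_ue^{2\lambda}(v')^2$, hence $\frac{d\bar{\theta}}{dt}=-\lambda_u\sin\bar{\theta}$, and division by $v'=e^{-\lambda}\sin\bar{\theta}$ gives the first formula. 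For the degenerate case you set aside, one can say more: since the $u$-curves are themselves geodesics ($\Gamma^u_{uu}=\Gamma^v_{uu}=0$), uniqueness of geodesics forces $\sin\bar{\theta}$ either to vanish identically (then $v$ is constant along $\gamma$ and the claim is vacuous) or to vanish nowhere, so the division is legitimate on all of $I$.

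Your treatment of the second formula is also the right one, and your instinct about where the difficulty sits is sound. The identity $\eta_2(\bar{\theta})=-\lambda_u$ can only be read as shorthand for $e^{-\lambda}\frac{d\bar{\theta}}{dv}=-\lambda_u$ along $\gamma$, i.e., as a restatement of the first formula, which is exactly how you treat it. Be aware, though, that your stated justification (that $\frac{d\bar{\theta}}{dv}$ depends only on the base point) does not by itself upgrade $\bar{\theta}$ to a function on a neighborhood to which $\eta_2$ may be applied; and if one instead reads $\eta_2(\bar{\theta})$ as a genuine directional derivative of the angle function of a geodesic field extending $\gamma$ (which is how the formula is invoked in the proof of Lemma \ref{L1}), the identity is false in general: the field condition $\nabla_{\varepsilon_1}\varepsilon_1=0$ with $\varepsilon_1=\cos\bar{\theta}\,\eta_1+\sin\bar{\theta}\,\eta_2$ gives only $\cos\bar{\theta}\,\eta_1(\bar{\theta})+\sin\bar{\theta}\,\eta_2(\bar{\theta})=-\lambda_u\sin\bar{\theta}$, which yields $\eta_2(\bar{\theta})=-\lambda_u$ only when $\eta_1(\bar{\theta})=0$; e.g., for the radial geodesic field in the flat plane ($\lambda\equiv0$) one has $\eta_2(\bar{\theta})=u/(u^2+v^2)\not\equiv0=-\lambda_u$. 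So the ``main obstacle'' you flagged is not a defect of your write-up: it is the only nontrivial content of the ``Furthermore'' clause, and the purely notational reading you adopt is the only one under which that clause is a theorem.
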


 Now we are ready to prove  the following technical lemma which plays a crucial role in the proofs of our main theorems.

\begin{lemma}\label{L1}
Let $\pi:(M^3,g)\to (N^2,h)$ be a Riemannian submersion
from 3-manifolds  with  an orthonormal frame $\{e_1, e_2, e_3\}$ and
$ e_3$ being vertical. If $\nabla_{e_{1}}e_{1}=0$, then either
$(i)$ $\nabla_{e_{2}}e_{2}=0$,\;or
$(ii)$ $\nabla_{e_{2}}e_{2}\not\equiv0$, and the orthonormal
frame $\{e_1, e_2, e_3\}$  is an adapted frame to the Riemannian submersion $\pi$.
\end{lemma}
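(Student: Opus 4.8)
The plan is to turn the hypothesis and conclusion into statements about the integrability data of the given natural frame, and then argue geometrically. By the connection formulas (\ref{R2}) we have $\nabla_{e_1}e_1=-f_1e_2$ and $\nabla_{e_2}e_2=f_2e_1$, so the hypothesis $\nabla_{e_1}e_1=0$ is exactly $f_1\equiv 0$, alternative $(i)$ is $f_2\equiv 0$, and by the last statement of Proposition \ref{Pro2} the frame is adapted precisely when $f_3\equiv 0$. Thus the entire lemma reduces to the single implication: if $f_1\equiv 0$ and $f_2\not\equiv 0$, then $f_3\equiv 0$. I would state this reduction at the outset.

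Next I would extract the geometric content of $f_1=0$. Since $\nabla_{e_1}e_1=0$, the integral curves of the horizontal field $e_1$ are geodesics of $M$ that are everywhere horizontal; by Corollary \ref{Coro3} each projects to a geodesic of $N$, and $d\pi(e_1)$ is the unit tangent field of this congruence. Writing $e_1=\cos\theta\,\alpha_1+\sin\theta\,\alpha_2$ with respect to an adapted frame as in Proposition \ref{Pro2}, formula (\ref{PR1}) gives $f_3=-e_3(\theta)$; and since $d\pi$ is a linear isometry on horizontal vectors, $\theta$ is also the angle between the projected geodesic and the $\pi$-image of $\alpha_1$. So proving $f_3\equiv 0$ is the same as showing that $\theta$ is constant along the fibres, i.e. that the projected geodesics form a genuine local foliation of $N$ rather than a congruence whose direction rotates as one moves along a fibre.

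To exploit this I would pass to semi-geodesic coordinates $(u,v)$ on $N$, so that $h=du^2+e^{2\lambda(u,v)}dv^2$; then $[\eta_1,\eta_2]=-\lambda_u\eta_2$, whence $h_1=0$ and $h_2=-\lambda_u$, and (\ref{PR1}) becomes $f_1=-e_1(\theta)-\lambda_u\sin\theta$ and $f_2=-e_2(\theta)-\lambda_u\cos\theta$. The equation $f_1=0$ is then precisely the geodesic angle relation of Proposition \ref{Pro4} with $\bar\theta=\theta$, which confirms the congruence is geodesic and fixes the horizontal derivatives of $\theta$. I would then feed these expressions into the Jacobi identities (\ref{Jac}) — most usefully $e_1(f_3)=(\kappa_1+f_2)f_3$ and $e_2(f_3)=e_3(f_2)+\kappa_2f_3$ — together with the fibrewise constancy (\ref{GCB0}) of the Gauss curvature $K^N$, in order to build a closed first-order system governing $f_3$ and to propagate the vanishing of $f_3$ off the set where $f_2\neq 0$.

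The step I expect to be the real obstacle is exactly this last one: controlling the fibre-derivative $e_3(\theta)$. The relation $e_1(f_3)=(\kappa_1+f_2)f_3$ is a linear ODE for $f_3$ along each geodesic of the congruence, so by uniqueness it only shows that the zero set of $f_3$ is a union of such geodesics; and the coordinate expressions above determine $e_1(\theta)$ and $e_2(\theta)$ but say nothing a priori about $e_3(\theta)$. What must therefore be produced is a further relation tying $e_3(\theta)$ to $f_2$ — equivalently, a proof that a ``rotating'' geodesic congruence, for which $d\pi(e_1)$ turns along the fibres, is incompatible with $f_2\not\equiv 0$, so that $f_2\not\equiv 0$ forces the projected geodesics to foliate $N$. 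This is the crux of the lemma, and I would expect the essential work, together with the genuine use of the submersion structure beyond the bare condition $f_1=0$, to be concentrated here.
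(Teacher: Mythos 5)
Your reduction is correct and matches the paper's: by (\ref{R2}) the hypothesis $\nabla_{e_1}e_1=0$ is $f_1\equiv 0$, alternative $(i)$ is $f_2\equiv 0$, and by Proposition \ref{Pro2} adaptedness is $f_3\equiv 0$; your geometric setup (semi-geodesic coordinates with $H_1=0$, $H_2=-\lambda_u$, integral curves of $e_1$ being horizontal geodesics that project to geodesics of $N$ via Corollary \ref{Coro3}, and the identity $f_3=-e_3(\theta)$) is also exactly the first half of the paper's argument. But the proposal then stops at an acknowledged hole: you say that a further relation tying $e_3(\theta)$ (equivalently $f_3$) to $f_2$ ``must be produced,'' and you do not produce it. The tools you propose for producing it --- the Jacobi identities (\ref{Jac}) and the fibrewise constancy (\ref{GCB0}) of $K^N$ --- are never shown to yield such a relation, and your own observation that $e_1(f_3)=(\kappa_1+f_2)f_3$ merely propagates zeros of $f_3$ along the congruence indicates why they may not suffice. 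So this is a genuine gap: the crux of the lemma is correctly identified but not proved.

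The missing step is, in fact, a different use of Proposition \ref{Pro4} than the one you make. You read Proposition \ref{Pro4} as the tangential relation along the geodesic (and hence as equivalent to $f_1=0$), but the paper exploits its second assertion, $\eta_2(\bar\theta)=-\lambda_u=H_2$, i.e.\ control of the derivative of the geodesic's angle in the direction $\eta_2$, which is transverse to the geodesic. Concretely: along an arbitrary integral curve $c$ of $e_1$, one has $\theta=\bar\theta\circ\pi$, where $\bar\theta$ is the angle of the projected geodesic $\gamma=\pi\circ c$ with the $u$-curves, because $d\pi$ preserves inner products of horizontal vectors; hence along $c$, $\alpha_2(a)=d\pi(\alpha_2)(\bar a)=\eta_2(\bar a)=-\bar b H_2$, i.e.\ $\alpha_2(a)=-bh_2$. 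On the other hand, $f_1=0$ together with (\ref{PR1}) gives $e_1(a)=-b^2h_2$ and $e_2(a)=bf_2-abh_2$ on all of $M$, so that $\alpha_2(a)=be_1(a)+ae_2(a)=abf_2-bh_2$ everywhere. Comparing the two expressions along $c$, and letting $c$ run through every point of $M$, yields $abf_2\equiv 0$. The dichotomy of the lemma follows: on any set where $f_2\neq 0$ one gets $ab=0$, so $(a,b)$ is locally constant among $(\pm1,0),(0,\pm1)$, whence $f_3=be_3(a)-ae_3(b)=0$ and the frame is adapted; otherwise $f_2\equiv 0$, i.e.\ $\nabla_{e_2}e_2=0$. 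Without this transverse comparison (or an equivalent substitute supplying the relation you asked for), your outline cannot be completed as stated.
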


\begin{proof}
 It is well known that a 2-manifold $(N^2,h)$ can be locally expressed as $(N^2,h)=(\r^2,du^2+e^{\lambda(u,v)}dv^2)$ with respect to local coordinates $(u,v)$. Then, we have an orthonormal frame
\begin{equation}\label{Z1}\notag
\eta_1=\frac{\partial}{\partial u},\; \eta_2=e^{-\lambda}\frac{\partial}{\partial v}
\end{equation}
with
\begin{equation}\label{Z2}\notag
[\eta_1,\eta_2]=H_1\eta_2+H_2\eta_2=H_2\eta_2,
\end{equation}
where $H_1=0,\;H_2=-\lambda_u$.\\
Let  $\alpha_1, \alpha_2$ be the horizontal lifts of  $\eta_1, \eta_2$ respectively. Then, we have an adapted orthonormal frame $\{\alpha_1, \alpha_2, e_3\}$ with vertical $e_3$,  and $\alpha_1, \alpha_2$  $\pi$-related to $\eta_1, \eta_2$ respectively, and
\begin{equation}\label{Z3}\notag
\begin{array}{lll}
[\alpha_1,e_3]=\tau_1e_3,\;[\alpha_2,e_3]=\tau_2e_3,\;[\alpha_1,\alpha_2]=h_2\alpha_2-2\sigma e_3,\\
 h_1=H_1\circ\pi=0,\; h_2=H_2\circ\pi,\;
 e_3(h_1)=e_3(h_2)=0,
\end{array}
\end{equation}
where $\tau_1, \tau_2,\;h_2\;{\rm and} \;\sigma$  are the integrability
data.\\
Since both $\{e_1, e_2, e_3\}$ and $\{\alpha_1, \alpha_2, e_3\}$ are  orthonormal frames on $M^3$, we can assume
\begin{equation}\label{Z4}
\begin{array}{lll}
e_1=a\alpha_1+b\alpha_2,\;e_2=-b\alpha_1+a\alpha_2,
\end{array}
\end{equation}
where $a^2+b^2=1$.\\
Using Proposition \ref{Pro2} with $h_1=0$, the assumption that $\nabla_{e_1}e_1=0$, and (\ref{R2}) we have
\begin{equation}\label{Z5}
\begin{array}{lll}
[e_1,e_3]=f_{3}e_2+\kappa_1e_3,\; [e_2,e_3]=-f_{3}e_1+\kappa_2e_3,\;
[e_1,e_2]=f_2e_2-2\sigma e_3,\\
f_1=-ae_1(b)+be_1(a)+bh_2=0,\;
 f_2=-ae_2(b)+be_2(a)+ah_2.
\end{array}
\end{equation}

A further computation using $ae_1(a)+be_1(b)=0,\;ae_2(a)+be_2(b)=0$ and $a^2+b^2=1$, and the last two equations of (\ref{Z5}) gives $
e_1(a)=-b^2h_2$, and $e_2(a)=bf_2-abh_2.$
It follows from this and  (\ref{Z4}) that
\begin{equation}\label{Z7}
\alpha_2(a)=be_1(a)+ae_2(a)=abf_2-bh_2.
\end{equation}

Now, we  show that $abf_2=0$.\\
{\bf First of all},  let $c:I\to (M^3,g)$ be  an arbitrary integral curve  of the vector field
$e_{1}$ by arc length parameter, i.e.,  $c'(t)=e_1$ along $c$. Since  $\nabla_{ e_1} e_1=0$, we have   $\nabla _{c'(t)}c'(t)=0$ and hence $c$ is a geodesic in $M$
which is horizontal at a neighborhood on $U\subset M$.  Therefore, by
Corollary \ref{Coro3}, $c$ is horizontal throughout and
$\gamma=\pi\circ c$ is a geodesic in $N$. Using the existence of geodesic pole coordinates, we know that there exists a local orthonormal frame   $\{\varepsilon_1, \varepsilon_2\}$ on $N^2$  such that $\varepsilon_1|_\gamma=\gamma'$. Denote by $\varepsilon_1(t),\; \varepsilon_2(t)$  the vector fields $\{\varepsilon_1, \;\varepsilon_2\}$ along $\gamma$, i.e., $\varepsilon_1(t)=\varepsilon_1|_\gamma=\gamma',\; \varepsilon_2(t)=\varepsilon_2|_\gamma$. Let $e_1(t)$ and $e_2(t)$ denote the horizontal vector fields $e_1$ and $e_2$ along $c$, respectively, i.e., $e_1(t)=e_1|_{c(t)}=c'(t)$, $e_2(t)=e_2|_{c(t)}$. It follow that $e_1(t)$,\;$e_2(t)$ along $c$ are $\pi$-related to $\varepsilon_1(t),\; \varepsilon_2(t)$ along $\gamma$, respectively, since $e_1(t)=c'(t)$ is the horizontal lift  of  $\varepsilon_1(t)=\gamma'$ tangent to $\gamma$  by Proposition \ref{Pro3}.\\

 {\bf Secondly},\; since $\alpha_1, \alpha_2$ be the horizontal lifts of  $\eta_1, \eta_2$, we have $\alpha_1(t)=\alpha_1|_{c(t)}, \alpha_2(t)=\alpha_2|_{c(t)}$  along $c$ is $\pi$-related to $\eta_1(t)=\alpha_1|_{\gamma}, \eta_2(t)=\alpha_2|_{\gamma}$ along $\gamma=\pi\circ c$, respectively.
If we assume that $\bar{\theta}$ is the angle between the geodesic curve $\gamma$ and  $u$-curves, then we have
\begin{equation}\label{Z8}\notag
\begin{array}{lll}
\varepsilon_1(t)=\bar{a}\eta_1(t)+\bar{b}\eta_2(t),\;
\varepsilon_2(t)=-\bar{b}\eta_1(t)+\bar{a}\eta_2(t),\;
\bar{a}=\cos\bar{\theta},\;\bar{b}=\sin\bar{\theta}.
\end{array}
\end{equation}
Therefore, using Proposition \ref{Pro4}, we have
\begin{equation}\label{Z9}\notag
\eta_2(t)(\bar{\theta})=H_2|_\gamma,
\end{equation}
where $H_2=-\lambda_u$.
Noting that $\bar{a}=\cos\bar{\theta}, \bar{b}=\sin\bar{\theta}$,  one can compute the following
\begin{equation}\label{Z10}
\begin{array}{lll}
\eta_2(t)(\bar{a})=\eta_2(t)(\cos\bar{\theta})
=-\sin\bar{\theta}\eta_2(t)(\bar{\theta})=-\bar{b}H_2|_\gamma.
\end{array}
\end{equation}
{\bf Thirdly}, we assume that  $\theta$ is the angle between  $e_1(t)$  and  $\alpha_1(t)$, in other words, $\theta$ is the angle between  $e_1$  and  $\alpha_1$ along $c$. Noting that $\bar{\theta}$ is the angle between the geodesic curve $\gamma$ and  $u$-curves, that is,  $\bar{\theta}$ is the angle between  $\varepsilon_1(t)$  and  $\eta_1(t)$, then  we can check that $a=\cos\theta=\langle e_1,\alpha_1\rangle=\langle \varepsilon_1,\eta_1\rangle\circ\pi=\cos\bar{\theta}\circ\pi=\bar{a}\circ\pi$ along\; $c$,\; i.e., $a=\bar{a}\circ\pi$ along $c$,  since a Riemannian submersion preserves the inner product of horizontal vector fields. Similarly, $b=\sin\theta=\sin\bar{\theta}\circ\pi=\bar{b}\circ\pi$ along $c$. It follows that $\theta=\bar{\theta}\circ\pi$ along $c$, i.e., $\theta|_c=\bar{\theta}|_{\pi\circ c}=\bar{\theta}|_\gamma$ along $c$. Along $c$, a straightforward computation using (\ref{Z10}) gives
\begin{equation}\label{Z11}
\begin{array}{lll}
\alpha_2(t)(a)=\alpha_2(t)(\bar{a}\circ\pi)=d\pi[\alpha_2(t)](\bar{a})=\eta_2(t)(\bar{a})=-\bar{b}H_2|_\gamma
\\=-(\bar{b} H_2)\circ\pi\circ c=-bh_2|_c.
\end{array}
\end{equation}
On the other hand, along $c$, by Eq. (\ref{Z7}), we have
\begin{equation}\label{Z12}
\alpha_2(a)=abf_2-bh_2.
\end{equation}
Comparing Eq.(\ref{Z11}) with Eq.(\ref{Z12}) along $c$, we have
\begin{equation}\label{Z13}\notag
abf_2|_c=0.
\end{equation}

{\bf Finally}, suppose  an arbitrary point $p\in M$ and let $c:I\to M^3$ be an arbitrary integral curve of $e_1$ by arc length parameter so that $c(0)=p,\;c'(t)=e_1(t)$,
from the above equation we conclude that the function $abf_2$  vanishes  along $c$. Then, we have  $abf_2|(p)=0,\;\forall p\in M$, that is
\begin{equation}\label{Z14}\notag
abf_2=0,\; {\rm on}\; M.
\end{equation}
As we remark earlier if $ab=0$ then the orthonormal frame $\{e_1, e_2\}$ is obtained from $\{\alpha_1, \alpha_2\}$ by a rotation of $\pi/2$ or $\pi$, therefore  $\{e_1, e_2, e_3\}$ is an adapted frame to the Riemannian submersion. Otherwise, we have $f_2=0$ and hence $\nabla_{e_{2}}e_{2}=f_2e_1=0$. Thus, we complete the proof of the lemma.
\end{proof}

\section{ A complete  classification  of biharmonic Riemannian submersions from BCV 3-spaces}

Note that BCV 3-spaces with $R=4m-l^2=0$ are  space forms $\r^3$ and $S^3$ and that biharmonic Riemannian submersions from a 3-dimensional space form were completely classified in \cite{WO}.

Now we  are ready  to prove the existence of a preferred adapted frame on a BCV space with $R=4m-l^2\neq0$ which is the key ingredient in our proof of the classification theorem.

\begin{theorem}\label{THC}
Let $\pi:M^3_{m,\;l}\to (N^2,h)$ be a Riemannian submersion from BCV 3-spaces with $R=4m-l^2\neq0$. Then, there exists an orthonormal  frame $\{e_1=a_1^1E_1+a_1^2E_2,\; e_2,
\;e_3\}$  adapted to the Riemannian submersion  with $e_3$ vertical.

\end{theorem}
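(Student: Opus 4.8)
The plan is to single out the unique horizontal direction orthogonal to the distinguished Killing field $E_3$ and to prove that it is basic. The enabling observation is that the connection table (\ref{BCV1}) is equivalent to the single identity $\nabla_XE_3=\tfrac{l}{2}\,X\times E_3$ (checked on $E_1,E_2,E_3$), so $E_3$ is Killing with covariant derivative a rotation in the $E_1E_2$-plane. Writing the unit vertical field as $e_3=v_1E_1+v_2E_2+v_3E_3$ and $w=\sqrt{v_1^2+v_2^2}$, I work on the open set $\{w>0\}$ (where $E_3$ is not vertical) and set
\[
e_1=\frac{1}{w}(v_2E_1-v_1E_2),\qquad e_2=e_3\times e_1 .
\]
By construction $e_1\in\mathrm{span}\{E_1,E_2\}$, $e_1\perp e_3$, and a short cross-product computation gives $E_3=v_3e_3-we_2$. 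Thus $\{e_1,e_2,e_3\}$ is a natural orthonormal frame of the form required, with $e_3$ vertical; it remains only to show it can be taken adapted.

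First I would show this frame already has $\nabla_{e_1}e_1=0$. Expanding $\nabla_{e_1}E_3$ in two ways — once from $E_3=v_3e_3-we_2$ using (\ref{R2}), and once from the Killing identity as $\tfrac{l}{2}\,e_1\times E_3=\tfrac{l}{2}(-v_3e_2-we_3)$ — and comparing the $e_1$-components yields $wf_1=0$, hence $f_1=0$ on $\{w>0\}$. By (\ref{R2}) this is exactly $\nabla_{e_1}e_1=0$, so Lemma \ref{L1} applies: either (ii) the frame is \emph{adapted}, in which case we are done, or (i) $\nabla_{e_2}e_2=0$, i.e. $f_1=f_2=0$.

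Case (i) is where I expect the main obstacle, and where the hypothesis $R=4m-l^2\neq0$ enters decisively. Repeating the two-way expansion for $\nabla_{e_2}E_3$ and $\nabla_{e_3}E_3$ gives relations such as $v_3(\sigma+\tfrac{l}{2})=0$, $e_2(w)=e_2(v_3)=0$, $e_3(w)=v_3\kappa_2$, $e_3(v_3)=-w\kappa_2$, together with $wf_3=-lw+v_3\kappa_1$ from the $e_1$-component of $\nabla_{e_3}E_3$. A short continuity argument upgrades the first to $\sigma\equiv-\tfrac{l}{2}$. I would then match the intrinsic curvature terms (\ref{RC}) against the BCV values (\ref{BCV2}): since $e_1$ is a Ricci eigendirection in $\mathrm{span}\{E_1,E_2\}$ while $e_2,e_3$ span the same $2$-plane as $\{(v_1E_1+v_2E_2)/w,\,E_3\}$, one computes directly $K^M(e_1\wedge e_3)=\tfrac{l^2}{4}+w^2R$, $R^M(e_1,e_2,e_2,e_3)=0$, and $R^M(e_1,e_3,e_1,e_2)=v_3wR$. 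Comparing with (\ref{RC}) (using $\sigma=-\tfrac{l}{2}$, $f_1=f_2=0$) forces $\kappa_2=0$, whence $v_3,w$ are constant, then $\kappa_1=-v_3wR/l$ and $\kappa_1^2=-w^2R$.

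The last relation is the crux. If $R>0$ it is impossible, so case (i) cannot occur and Lemma \ref{L1} leaves us in the adapted case (ii). If $R<0$ it forces $w^2=4m/R$, and substituting into $wf_3=-lw+v_3\kappa_1$ gives $f_3=(w^2R-4m)/l=0$, so the frame is adapted after all; the exceptional value $l=0$ (then $E_3$ is parallel and one gets $v_3=0$) is handled by the same scheme and again yields $f_3=0$. Finally, on the complementary set $\{w=0\}$ the vertical field is $\pm E_3$, the horizontal distribution is exactly $\mathrm{span}\{E_1,E_2\}$, and any basic horizontal frame already has the desired form, so the construction produces the adapted frame on all of $M^3_{m,l}$. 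The delicate bookkeeping in case (i), rather than the construction itself, is the step I anticipate as the real difficulty.
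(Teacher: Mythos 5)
Your proposal is correct and follows essentially the same route as the paper's proof: the same frame $e_1\propto e_3\times E_3$, the same key step $\nabla_{e_1}e_1=0$ (equivalently $f_1=0$) feeding into Lemma \ref{L1}, and the same curvature bookkeeping in the $\nabla_{e_2}e_2=0$ branch forcing $\sigma=-\frac{l}{2}$, $\kappa_2=0$, constancy of $v_3,w$, $\kappa_1^2=-w^2R$, and finally $f_3=0$. The only differences are cosmetic: you package (\ref{BCV1}) as the Killing identity $\nabla_XE_3=\frac{l}{2}\,X\times E_3$ and organize the endgame by the sign of $R$, whereas the paper derives the same relations (its system (\ref{thb2})--(\ref{RC1})) coefficient-by-coefficient and splits instead on $a_3^3=0$ versus $a_3^3\neq 0$.
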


\begin{proof}
It is easy to see that if  the vector field $ E_3$  is tangent to the fibers of the  Riemannian
submersion $\pi$, then $e_3=\pm E_3$ and  any  adapted frame has the required form.\\

From now on, we assume $ e_3\ne \pm E_3$. Then, the vector filed $e_1=\frac{e_3\times E_3}{| e_3\times E_3|}$  is  horizontal  and $\langle e_1, E_3\rangle=0$.
 It is not difficult to see that the frame $\{e_1, \;e_2=e_3\times e_1,\;e_3\}$ is an orthonormal frame on $M^3$.
Let $e_i=\sum\limits_{j=1}^{3}a_i^jE_j, i=1,2,3$ be the transformation between the frames $\{E_i\}$ and $\{e_i\}$. By the choice of $e_1$ we have  $a^3_1=\langle e_1, E_3\rangle=0$, and hence $e_1=a_1^1E_1+a_1^2E_2$ with $(a_1^1)^2+(a_1^2)^2=1$. This implies
\begin{equation}\label{zb}
\begin{array}{lll}
a_1^3=0,\;a_3^{3}\neq\pm1,\;{\rm (and\;hence)}\; a_2^{3}\neq 0.
\end{array}
\end{equation}
We will prove that in  this case,  we have
\begin{equation}\label{bb1}
 f_1=0,\;\nabla_{e_1}e_1=0.
\end{equation}
In fact, a straightforward computation using (\ref{R2}) gives
\begin{equation}\label{bb2}
\begin{array}{lll}
-f_1\sum\limits_{i=1}^{3}a_2^iE_i=-f_1e_{2}=\nabla_{e_{1}} e_{1}=\nabla_{e_{1}}(\sum\limits_{i=1}^{3}a_1^iE_i)
=\sum\limits_{i=1}^{3}e_1(a_1^i)E_i+\sum\limits_{i,j=1}^{3}a_1^ja_1^i\nabla_{E_j}E_i.
\end{array}
\end{equation}
By using (\ref{Lie}), $a_1^3=0$, and comparing  the coefficient of $E_3$ of  (\ref{bb2}) we have $-f_1 a_2^3=e_1(a_1^3)=0$, which gives $f_1=0$ since $a_2^3\neq0$. \\
Using  (\ref{BCV1}), (\ref{R2}) with  $a_1^3=f_1=0$,  and a further computation  similar to  those used  in computing  (\ref{bb2}) we obtain
\begin{equation}\label{thb2}
\begin{array}{lll}
e_1(a_{2}^{3})=-(\sigma+\frac{l}{2})a_{3}^{3},\;
e_1(a_{3}^{3})=(\sigma+\frac{l}{2})a_{2}^{3},\;
e_2(a_{2}^{3})=0,\;
e_2(a_{3}^{3})=0,\\
e_3(a_{2}^{3})=-\kappa_2a_{3}^{3},\;
e_3(a_{3}^{3})=\kappa_2a_{2}^{3},\;
\kappa_1a_{3}^{3}=(\sigma-\frac{l}{2}-f_{3})a_{2}^{3},\;
f_2a_{2}^{3}=(\sigma+\frac{l}{2})a_{3}^{3}.
\end{array}
\end{equation}

Since $\nabla_{e_{1}}e_{1}=0$, we use Lemma \ref{L1} to conclude that $\nabla_{e_{2}}e_{2}\neq0$, and the chosen orthonormal
frame $\{e_1, e_2, e_3\}$  is an adapted frame to the Riemannian submersion $\pi$, or \;$\nabla_{e_{2}}e_{2}=0$.
So, we only need to consider the latter case: \;$\nabla_{e_{2}}e_{2}=0$, i.e., $f_2=0$. Together with $f_1=a_1^3=0$ and a straightforward computation using (\ref{BCV1}), (\ref{BCV2}), (\ref{R2})\; and \;(\ref{RC}), we have the following equations
\begin{equation}\label{RC1}
\begin{cases}
-e_1(\sigma)+2\kappa_1\sigma=-a_{2}^{3}a_{3}^{3}R,\\
e_1(\kappa_1)+\sigma^2-\kappa_{1}^2=(a_{2}^{3})^2R+\frac{l^2}{4},\;\\
e_1(\kappa_2)-e_3(\sigma)-\kappa_1\kappa_2=0,\;\\
2f_{3}\sigma-3\sigma^2=(a_{3}^{3})^2R+\frac{l^2}{4},\\
-e_2(\sigma)+2\kappa_2\sigma=0,\\
e_2(\kappa_{1})+e_3(\sigma)-\kappa_1 \kappa_2=0,\\
\sigma^{2}+e_2(\kappa_2)-\kappa_2^2=\frac{l^2}{4},\\
\end{cases}
\end{equation}
where $R=4m-l^2\neq0$.\\
Firstly, we show that $\sigma=-\frac{l}{2},\;\kappa_2=0$, and\;$\kappa_1\neq0$. Moreover, $m\neq0$ and $l=0$ holds iff $a_3^3=0$ holds. In fact, we use the 2nd, the 8th equation of (\ref{thb2}) and (\ref{zb}) with $f_2=0$ to obtain
$\sigma=-\frac{l}{2}$. Using this, together with the 1st,  the 5th equations of (\ref{RC1}) and the 6th  of (\ref{thb2}), we have $\kappa_2=0$. Since $(a_2^3)^2R\neq0$,
we apply the 2nd equation of (\ref{RC1}) with $\sigma=-\frac{l}{2}$ to conclude that $\kappa_1\neq0$. From these and the 1st equation of (\ref{RC1}), we can check that  $m\neq 0$ and $l=0$ is equivalent to $a_3^3=0$. \\

Based on the above, we only need to prove that $f_3=0$ and hence the frame $\{e_1, e_2, e_3\}$ is adapted to the Riemannian submersion in the following two cases.\\

Case I: $a_3^3=a_1^3=f_1=f_2=0$,  and hence $a_2^3=\pm1$ and $l=\sigma=0$. In this case, we have $f_3=0$ immediately from the 7th equation of (\ref{thb2}).\\

Case II: $a_3^3\neq0,\pm1$, $a_1^3=f_1=f_2=0$,\;$\kappa_1\sigma l\neq0$,  and $a_2^3\neq0,\pm1$. In this case,  we use  $\sigma=-\frac{l}{2}$, $\kappa_2=0$,  the 1st, the 2nd, the 5th  and the 6th equation of (\ref{thb2}) separately to have $e_1(a_2^3)=e_1(a_3^3)=e_3(a_2^3)=e_3(a_3^3)=0$. From these, together with the 3rd  and the 4th equation of (\ref{thb2}),  we conclude that $a_2^3, a_3^3$ are constants. Using these, the 1st and the 4th equations  of (\ref{RC1}), we  see that both $\kappa_1$ and $f_3$ are constants. Substituting these and $\sigma=-\frac{l}{2}$ into the 1st and  2nd equation of (\ref{RC1}) we have $\kappa_1l=a_2^3a_3^3R,\;{\rm and}\;\kappa_1^2=-(a_2^3)^2R$. It follows that $(a_3^3)^2=\frac{-l^2}{4m-l^2}$, and hence  $(a_2^3)^2=1-(a_3^3)^2=\frac{4m}{4m-l^2}$ and  $\kappa_1^2=-4m$ being positive constants. From these and  the 4th equation of (\ref{RC1}), we  have $f_3=0$.\\

 Combining the results obtained in Cases I, II,  and those obtained  in the case of $\nabla_{e_2}e_2\ne 0$ we obtain the theorem.
\end{proof}
Now we are ready to prove our main theorem.
\begin{theorem}\label{Th1}
A proper biharmonic Riemannian submersion $\pi:M^3_{m,\;l}\to (N^2,h)$
from a BCV 3-space  exists only  in the cases: $H^2(4m)\times\r
\to \r^2$,\;or\;$\widetilde{SL}(2,\r)\to\r^2$.
\end{theorem}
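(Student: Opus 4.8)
The plan is to pass to the adapted frame furnished by Theorem \ref{THC} and then read off biharmonicity from (\ref{lem1}). Since the BCV spaces with $R=4m-l^2=0$ are the space forms $\r^3$ and $S^3$, where \cite{WO} already excludes proper biharmonic submersions, I may assume $R\neq0$ and take the adapted frame $\{e_1=a_1^1E_1+a_1^2E_2,\,e_2,\,e_3\}$ of Theorem \ref{THC}, so that $f_1=0$, $f_3=0$ and $\nabla_{e_1}e_1=0$ by (\ref{bb1}). First I would dispose of the case $e_3=\pm E_3$: then the fibers are the integral curves of $E_3$, which are geodesics because $\nabla_{E_3}E_3=0$ in (\ref{BCV1}); the fibers are therefore minimal and $\pi$ is harmonic, not proper biharmonic. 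Hence every proper biharmonic example has $e_3\neq\pm E_3$, and by Lemma \ref{L1} either $\nabla_{e_2}e_2=0$ or $\nabla_{e_2}e_2\neq0$.

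Suppose first $\nabla_{e_2}e_2=0$, i.e. $f_2=0$. The proof of Theorem \ref{THC} then gives $\sigma=-l/2$, $\kappa_2=0$ and $\kappa_1\neq0$, so $K^N=e_1(f_2)-e_2(f_1)-f_1^2-f_2^2=0$ by (\ref{GCB1}); the base is flat and the target is $\r^2$. With $f_1=f_2=\kappa_2=0$ and $K^N=0$, the system (\ref{lem1}) collapses to $\Delta^M\kappa_1=0$. Using $\nabla_{e_3}e_3=\kappa_1e_1$ and $\nabla_{e_1}e_1=\nabla_{e_2}e_2=0$ from (\ref{R2}), the fact that $e_2(\kappa_1)=0$ (the sixth identity of (\ref{RC1}), as $\sigma$ is constant), and the Riccati identity $e_1(\kappa_1)=\kappa_1^2+4m$ coming from the second identity of (\ref{RC1}) (there $\sigma^2=l^2/4$ and $(a_2^3)^2R=4m$ in both Cases I and II), one finds $\Delta^M\kappa_1=\kappa_1(\kappa_1^2+4m)+e_3(e_3\kappa_1)$, so biharmonicity says $e_3(e_3\kappa_1)=-\kappa_1(\kappa_1^2+4m)$.

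The crucial step is to eliminate the vertical derivative $g:=e_3(\kappa_1)$. Applying $e_3$ to the Riccati identity and using $[e_1,e_3]=\kappa_1e_3$ gives $e_1(g)=3\kappa_1g$; computing $[e_1,e_3]g$ in two ways, with $e_3(g)=-\kappa_1(\kappa_1^2+4m)$ and $e_1(g)=3\kappa_1g$, yields the algebraic relation $3g^2=\kappa_1^4-16m^2$; and differentiating the latter along $e_1$ produces $(\kappa_1^2+4m)(\kappa_1^2-12m)=0$. As the roots are constants, $\kappa_1^2$ is locally constant, whence $e_1(\kappa_1)=0$ and the Riccati identity forces $\kappa_1^2=-4m$, while the alternative root $\kappa_1^2=12m$ would give $m=0$ and $\kappa_1=0$. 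Thus $\kappa_1^2=-4m>0$, so $m<0$: the domain is $H^2(4m)\times\r$ if $l=0$ (Case I) and $\widetilde{SL}(2,\r)$ if $l\neq0$ (Case II), always with base $\r^2$. This simultaneously excludes $S^2\times\r$, $SU(2)$ and $Nil$, where $m\ge0$ makes $\kappa_1^2=-4m\le0$ impossible.

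The remaining case $\nabla_{e_2}e_2\neq0$, i.e. $f_2\neq0$, is the main obstacle, since the rigidity of Theorem \ref{THC} is gone and the two equations of (\ref{lem1}) stay coupled through $\kappa_2$. Here the structural identities still give $\kappa_1f_2=\sigma^2-l^2/4$ from the last two relations of (\ref{thb2}) and, via the seventh identity of (\ref{RC}) (whose BCV value is $l^2/4$), $e_2(\kappa_2)=\kappa_2^2$. My plan is to show $\kappa_2\equiv0$ first—by the same commutator-and-differentiation bookkeeping applied to the second equation of (\ref{lem1}) (or, granting completeness of the homogeneous model, by a blow-up argument along the integral curves of $e_2$)—and then to use (\ref{thb2}) and (\ref{RC}) to see that $\sigma,a_2^3,a_3^3$ are constant on the leaves of $\{e_2,e_3\}$. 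Writing $a_2^3=\cos\phi$, $a_3^3=\sin\phi$ reduces the problem to ODEs along the integral curves of $e_1$, with $\phi'=\sigma+l/2$, $\kappa_1=(\sigma-\tfrac l2)\cot\phi$ and $f_2=(\sigma+\tfrac l2)\tan\phi$; substituting the curvature equation for $\sigma$ into the reduced biharmonic equation should force $\sin^2\phi$ to be constant, hence $\phi'=0$ and $f_2=0$, a contradiction. I have verified this when $l=0$: the $\sigma$-equation is then linear and integrates to $\sigma^2=\sin^2\phi\,(-R+C\sin^2\phi)$, after which the biharmonic equation reduces to $-R+C-4C\sin^2\phi=0$, which is impossible for non-constant $\phi$ when $R\neq0$. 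The hard part will be the general $l$, where the equation for $\sigma$ is no longer linear; there I would replace the explicit integration by the same commutator identities used in the case $f_2=0$, reducing the over-determined biharmonic system to an algebraic identity inconsistent with $f_2\neq0$ and $R\neq0$.
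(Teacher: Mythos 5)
Your treatment of the case $\nabla_{e_2}e_2=0$ is correct and runs along essentially the same lines as the paper: the biharmonic equation collapses to $\Delta^M\kappa_1=0$, the Riccati identity $e_1(\kappa_1)=\kappa_1^2+4m$ together with $[e_1,e_3]=\kappa_1 e_3$ yields the algebraic relation $3\left(e_3(\kappa_1)\right)^2=\kappa_1^4-16m^2$ (your constant $16m^2$ is the correct one; the paper's (\ref{th10}) carries a typo, $16m$, which does not affect its conclusion), and one is forced to $\kappa_1^2=-4m>0$, hence $m<0$, giving $H^2(4m)\times\r\to\r^2$ when $l=0$ and $\widetilde{SL}(2,\r)\to\r^2$ when $l\neq0$. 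Differentiating the algebraic relation along $e_1$ instead of $e_3$ is a harmless variant of the paper's step (\ref{th12}).

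The genuine gap is the case $\nabla_{e_2}e_2\neq0$ (i.e.\ $f_2\neq0$), which is where the bulk of the paper's proof lives; what you offer there is a research plan, not a proof. Concretely, three things are missing. (a) You never prove $\kappa_2\equiv0$; you only announce it. Your fallback blow-up argument via $e_2(\kappa_2)=\kappa_2^2$ is not available as stated: the frame $\{e_1,e_2,e_3\}$ is only defined on the open set where $e_3\neq\pm E_3$, so integral curves of $e_2$ need not stay in that set or be complete, and the classification is local in nature anyway; the paper instead derives $\kappa_2=0$ algebraically in Claim I, equations (\ref{th15})--(\ref{th21}). (b) Your reduction to ODEs along the integral curves of $e_1$ presupposes $e_2(\sigma)=e_3(\sigma)=e_2(\kappa_1)=e_3(\kappa_1)=0$; this is again the content of Claim I, which you do not establish. (c) Most importantly, you explicitly leave the general case $l\neq0$ open (``the hard part will be the general $l$''). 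That is precisely the difficult half of the theorem: in the paper it is Claim II, a long elimination using (\ref{RC2}), (\ref{thb2}) and the reduced biharmonic equation (\ref{thb26}) that produces the two polynomial relations (\ref{th29}) and (\ref{th32}) in $\sigma$ and $(a_3^3)^2$, eliminates $(a_3^3)^2$ to obtain the degree-seven identity (\ref{th35}), concludes that $\sigma$ (hence $a_3^3$ and $\kappa_1$) is constant, and then deduces $\sigma=-\frac{l}{2}$, contradicting $\sigma\neq-\frac{l}{2}$ from Claim I. Without an argument of this kind, nothing is excluded in the $f_2\neq0$ regime; in particular your claim that $S^2\times\r$, $SU(2)$ and $Nil$ are ``simultaneously excluded'' by the $\nabla_{e_2}e_2=0$ analysis is premature, since those spaces must also be ruled out when $f_2\neq0$. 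For what it is worth, your $l=0$ computation (integrating the linear ODE to $\sigma^2=\sin^2\phi\,(-R+C\sin^2\phi)$ and reducing biharmonicity to $-R+C-4C\sin^2\phi=0$) is correct \emph{given} (a) and (b), and is a cleaner route than the paper's elimination for that sub-case, but it does not extend to $l\neq0$ as written.
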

\begin{proof}
 First of all, we may assume that  $R=4m-l^2\neq0$  for the reason mentioned at the beginning of the section. We also assume that $a_3^{3}\neq\pm1$ for otherwise one can  use (\ref{BCV1}) to check that the tension of the Riemannian  submersion  $\tau(\pi)=- d\pi(\nabla^{M}_{E_3}E_3)=0$, and hence it is harmonic. \\

 Adopting the same notations and sign convention as in the proof of Theorem \ref{THC}, we know from Theorem \ref{THC} that there exists an orthonormal frame $\{e_1,\; e_2,
\;e_3\}$ adapted to the Riemannian submersion $\pi$ with $e_1=a_1^1E_1+a_1^2E_2$, $e_3$ being  vertical and hence $a_1^3=f_1=f_3=0$, $e_3(f_1)=e_3(f_2)=0$ (see \cite{WO}). These reduce (\ref{RC})  into
\begin{equation}\label{RC2}
\begin{cases}
-e_1(\sigma)+2\kappa_1\sigma=-a_{2}^{3}a_{3}^{3}R,\\
e_1(\kappa_1)+\sigma^2-\kappa_{1}^2=(a_{2}^{3})^2R+\frac{l^2}{4},\;\\
e_1(\kappa_2)-e_3(\sigma)-\kappa_1\kappa_2=0,\;\\
e_1(f_2)-f_{2}^{2}-3\sigma^2=(a_{3}^{3})^2R+\frac{l^2}{4},\\
-e_2(\sigma)+2\kappa_2\sigma=0,\\
e_2(\kappa_{1})+e_3(\sigma)+\kappa_2 f_2-\kappa_1 \kappa_2=0,\\
\sigma^{2}+e_2(\kappa_2)-\kappa_1f_2-\kappa_2^2=\frac{l^2}{4},\\
\end{cases}
\end{equation}
where $R=4m-l^2\neq0.$\\

According to Theorem \ref{THC}, we only need to consider the two cases: $\nabla_{e_2}e_2=0$ and $\nabla_{e_2}e_2\ne 0$. \\

For the case of  $\nabla_{e_2}e_2=0$, as in the proof of Theorem \ref{THC}, we have the following two sub-cases: Case I and Case II.\\

{\bf Case I:} $a_3^3=a_1^3=f_1=f_2=f_3=l=\kappa_2=\sigma=0$, $a_2^3=\pm 1$, $\kappa_1\neq0$, $R=4m-l^2=4m\neq0$, and $(N^2,h)$ is  (by  (\ref{GCB}))  locally $\r^2$. It is easily checked that in this case, (\ref{RC2}) reduces to
\begin{equation}\label{th3}
e_1(\kappa_1)=\kappa_1^2+4m,\;e_2(\kappa_1)=0,
\end{equation}
and biharmonic equation (\ref{lem1}) reads
\begin{equation}\label{th4}
\Delta\kappa_1=0.
\end{equation}
A straightforward computation gives
\begin{equation}\label{th5}
\begin{array}{lll}
\Delta\kappa_1=e_1e_1(\kappa_1)+e_3e_3(\kappa_1)-\nabla_{e_1}{e_1}(\kappa_1)-\nabla_{e_2}{e_2}(\kappa_1)-\nabla_{e_3}{e_3}(\kappa_1)\\
=e_1(\kappa_1^2+4m)+e_3e_3(\kappa_1)-\kappa_1e_1(\kappa_1)=e_3e_3(\kappa_1)+\kappa_1^3+4m\kappa_1.
\end{array}
\end{equation}
Substituting  (\ref{th5}) into (\ref{th4}), we have
\begin{equation}\label{th6}
\begin{array}{lll}
e_3e_3(\kappa_1)=-\kappa_1^3-4m\kappa_1.
\end{array}
\end{equation}
Applying $e_3$ to both sides of  the 1st equation of (\ref{th3}) and using the fact that $e_1e_3(\kappa_1) = [e_1, e_3](\kappa_1) + e_3e_1(\kappa_1)$,
we have
\begin{equation}\label{th7}
\begin{array}{lll}
e_1e_3(\kappa_1)=3\kappa_1e_3(\kappa_1).
\end{array}
\end{equation}
Using  (\ref{th3}), (\ref{th6}),  (\ref{th7}), and a direct computation we get
\begin{equation}\label{th8}
\begin{array}{lll}
e_1e_3\{e_3(\kappa_1)\}-e_3e_1\{e_3(\kappa_1)\} =[e_1,e_3] \{e_3(\kappa_1)\}
=\kappa_1e_3e_3(\kappa_1)
=-\kappa_1^4-4m\kappa_1^2,
\end{array}
\end{equation}
and
\begin{equation}\label{th9}
\begin{array}{lll}
e_1e_3\{e_3(\kappa_1)\}-e_3e_1\{e_3(\kappa_1)\}
=e_1\{e_3e_3(\kappa_1)\}-e_3\{e_1e_3(\kappa_1)\}\\
=-4m\kappa_1^2-16m-3e_3^2(\kappa_1).\\
\end{array}
\end{equation}

Comparing  (\ref{th8}) with (\ref{th9}), we get
\begin{equation}\label{th10}
\begin{array}{lll}
3e_3^2(\kappa_1)=\kappa_1^4-16m.\\
\end{array}
\end{equation}
 Applying $e_3$ to both sides of  (\ref{th10}) and using (\ref{th6}) to
simplify the resulting equation we have
\begin{equation}\label{th12}
\begin{array}{lll}
\kappa_1(5\kappa_1^2+12m)e_3(\kappa_1)=0,
\end{array}
\end{equation}
which implies $e_3(\kappa_1)=0$. Substituting this into (\ref{th6}) and using that fact that  $m\kappa_1\neq0$ we obtain $\kappa_1^2=-4m>0$. Therefore, the BCV space in this case has $m<0\;{\rm and}\;l=0$, so it is $H^2(4m)\times\r$. Thus, a potential proper biharmonic Riemannian submersion exists  in the case $H^2(4m)\times\r\to\r^2$. \\

{\bf Case II:} $a_2^3,\;a_3^3\neq0,\pm1$, $a_1^3=f_1=f_2=f_3=\kappa_2=0$,\;$\kappa_1\neq0$, $\sigma=-\frac{l}{2}\neq0$, and $(N^2,h)$ is (by  (\ref{GCB}))  locally $\r^2$.
In this case, as in the proof the Theorem \ref{THC}, we have $\kappa_1^2=-4m>0$, $(a_3^3)^2=\frac{-l^2}{4m-l^2},$ and $(a_2^3)^2=\frac{4m}{4m-l^2}$ are positive constants. It
is easily checked that the biharmonic equation  (\ref{lem1}) holds.  Note that in this case, we have $m<0\;{\rm and}\;l\neq0$, so the BCV space is  $\widetilde{SL}(2,\r)$, and  the potential  proper biharmonic Riemannian submersion  exists  in the case of  $\widetilde{SL}(2,\r)\to\r^2$. \\

For the case of $\nabla_{e_2}e_2\ne 0$, we will show that there exists no proper biharmonic Riemannian submersion from a BCV space.\\

Note that in this case the hypotheses can be summarized as:
\begin{align} \label{H}
a_3^3\neq0,\;\pm1, f_1=f_3=a_1^3=0, f_2\neq0, R=4m-l^2\neq0, a_2^3\neq0,\;\pm1
\end{align}

{\bf Claim I:}  Under the hypotheses (\ref{H}), we have
\begin{align}\label{step1}
e_2(f_2)=e_2(\kappa_1)=e_3(\kappa_1)=e_2(\sigma)=e_3(\sigma)=\kappa_2=0,\;\kappa_1\neq0,\;{\rm and}\;\sigma\neq0,\pm\frac{l}{2}.
\end{align}

 {\bf Proof of Claim I:} (i)  $\sigma\neq0$ since if $\sigma=0$, we use the 1st equation of (\ref{RC2}) to have $a_2^3a_3^3R=0$ a contradiction.\\

 (ii) $\kappa_2=e_2(\kappa_1)=e_2(\sigma)=e_3(\sigma)=0$.  In fact, applying $e_3$ to both sides of the 4th
 equation of (\ref{RC2}) and the 8th equation of (\ref{thb2}) separately and using the 5th, the 6th  equations of  (\ref{thb2}), (\ref{GCB0}), $f_3=0$ and $e_3(f_1)=e_3(f_2)=0$, we obtain
\begin{equation}\label{th15}
\begin{array}{lll}
e_3(\sigma)=-\frac{1}{3\sigma}\kappa_2a_2^3a_3^3R,\;\;
-f_2\kappa_2a_3^3=e_3(\sigma)a_3^3+(\sigma+\frac{l}{2})\kappa_2a_2^3.\\
\end{array}
\end{equation}
Using (\ref{th15}) and the 8th equation of (\ref{thb2}) we have
\begin{equation}\label{th17}
\begin{array}{lll}
\kappa_2\left(-(a_2^3a_3^3)^2R+3\sigma(\sigma+\frac{l}{2})\right)=0,\\
\end{array}
\end{equation}
which implies $\kappa_2=0$,\;or
\begin{equation}\label{th20}
\begin{array}{lll}
3\sigma(\sigma+\frac{l}{2})=(a_2^3a_3^3)^2R.
\end{array}
\end{equation}
For the latter case, applying $e_2$ to both sides of (\ref{th20}) and using the 3rd and the 4th equation of (\ref{thb2}) yields
\begin{equation}\label{th21}
\begin{array}{lll}
3(2\sigma+\frac{l}{2})e_2(\sigma)=0,
\end{array}
\end{equation}
which implies $e_2(\sigma)=0.$
Substituting this into the 5th equation of (\ref{RC2}) and using the fact that $\sigma\neq0$ we also have $\kappa_2=0$,  and hence $e_3(\sigma)=0$ by (\ref{th15}). From these and  the 6th equation of (\ref{RC2}), we get $e_2(\kappa_1)=0$.\\

(iii) $e_2(f_2)=e_3(\kappa_1)=0$. Indeed, applying $e_2$ to
both sides of the 2nd equation of  (\ref{RC2}) and the 8th
equation of  (\ref{thb2}) separately and using the 3rd and the 4th  equations of  (\ref{thb2}) together with $a_2^3\neq0$,\; $e_2(\kappa_1)=e_2(\sigma)=0$\;and\;$e_2(a_2^3)=0$, we get
\begin{equation}\label{th24}
\begin{array}{lll}
e_2e_1(\kappa_1)=0,\;e_2(f_2)=0,\;e_1e_2(\kappa_1)-e_2e_1(\kappa_1)=0.
\end{array}
\end{equation}
These, together with $0=e_1e_2(\kappa_1)-e_2e_1(\kappa_1)= [e_1,
e_2](\kappa_1)=-2\sigma e_3(\kappa_1)$\;and\;$\sigma\neq0$,  imply that $e_3(\kappa_1)=0$.\\

(iv)  $\sigma\neq\pm\frac{l}{2}$ and $\kappa_1\neq0$. Indeed, substituting $\kappa_2=0$ into  the 7th equation
of (\ref{RC2}) we obtain
\begin{equation}\label{th25}
\begin{array}{lll}
\kappa_1f_2=\sigma^2-\frac{l^2}{4},
\end{array}
\end{equation}
which, together with $f_2\neq0$, implies that $\sigma=\pm\frac{l}{2}$ is equivalent to $\kappa_1=0$. Clearly, if $\sigma=\pm \frac{l}{2}$ and hence $\kappa_1=0$,  then the 2nd equation of  (\ref{RC2}) implies that $(a_2^3)^2R=0$,  which is a contradiction since $a_2^3\neq0$ and $R\neq0$. Thus, we have $\sigma\neq\pm\frac{l}{2}$ and $\kappa_1\neq0$, which completes the proof of Claim I.\\

 {\bf Claim II}: Under the same hypotheses (\ref{H}), we have $\sigma =-\frac{l}{2}$.\\

 {\bf Proof of Claim II:} Since $f_1=\kappa_2=0$ and $K^N=e_1(f_2)-f_2^2$, the biharmonic equation (\ref{lem1}) reduces to
\begin{equation}\label{thb26}
\begin{array}{lll}
\Delta\kappa_1-\kappa_1\{-e_1(f_2)+2f_2^2\}=0.
\end{array}
\end{equation}
Applying $e_1$ to both sides of the 2nd
equation of  (\ref{RC2}) and (\ref{th25}) separately and using the the 1st equation of (\ref{thb2}), we have
\begin{equation}\label{th26}
\begin{array}{lll}
e_1e_1(\kappa_1)=2\kappa_1e_1(\kappa_1)-2\sigma e_1(\sigma)-2(\sigma+\frac{l}{2})a_2^3a_3^3R,\\
\kappa_1e_1(f_2)+f_2e_1(\kappa_1)=2\sigma e_1(\sigma).
\end{array}
\end{equation}
A straightforward computation using the 1st, the 2nd equations of (\ref{RC2}), the 8th equation of (\ref{thb2}), (\ref{th25}), (\ref{th26}), (\ref{step1}), (\ref{R2}) and (\ref{thb26})  with
$f_1=0$ yields
\begin{equation}\label{th27}
\begin{array}{lll}
0=\Delta\kappa_1-\kappa_1\{-e_1(f_2)+2f_2^2\}\\
=e_ie_i(\kappa_1)-\nabla_{e_i}e_i(\kappa_1)-\kappa_1\{-e_1(f_2)+2f_2^2\}\\
=\kappa_1^3-3\kappa_1^2f_2+\kappa_1(a_2^3)^2R-4(\sigma
+\frac{l}{2})a_2^3a_3^3R.
\end{array}
\end{equation}

Note that $f_3=0$, so the 7th equation of (\ref{thb2}) becomes
$\kappa_1a_3^3=(\sigma-\frac{l}{2})a_2^3$.
 By multiplying $a_3^3$ to both sides of (\ref{th27})  and using the fact that
$\kappa_1a_3^3=(\sigma-\frac{l}{2})a_2^3$,\;
$\kappa_1f_2=\sigma^2-\frac{l^2}{4}$ \; and $(a_2^3)^2+(a_3^3)^2=1$\;to simplify the resulting
equation,  we get
\begin{equation}\label{th28}
\begin{array}{lll}
\kappa_1^2=\frac{5\sigma+\frac{3l}{2}}{\sigma-\frac{l}{2}}R(a_3^3)^2+3\sigma^2-\frac{3l^2}{4}-R.
\end{array}
\end{equation}
Similarly,  multiplying $(a_3^3)^2$ to both  sides of (\ref{th28}), using $\kappa_1a_3^3=(\sigma-\frac{l}{2})a_2^3$,  $(a_2^3)^2+(a_3^3)^2=1$, and simplifying the resulting
equation yields
\begin{equation}\label{th29}
\begin{array}{lll}
\frac{5\sigma+\frac{3l}{2}}{\sigma-\frac{l}{2}}R(a_3^3)^4+(4\sigma^2-\sigma
l-\frac{l^2}{2}-R)(a_3^3)^2-(\sigma-\frac{l}{2})^2=0.
\end{array}
\end{equation}
 On the other hand, by applying  $e_1$ to both sides of (\ref{th28}) and using the 1st, the 2nd equations of (\ref{RC2}), the 1st and the 2nd equations of (\ref{thb2}) we have \begin{equation}\label{thb30}
\begin{array}{lll}
2\kappa_1\{\kappa_1^2+(a_2^3)^2R-\sigma^2+\frac{l^2}{4}\}=-\frac{4le_1(\sigma)}{(\sigma-\frac{l}{2})^2}R(a_3^3)^2+\frac{5\sigma+\frac{3l}{2}}{\sigma-\frac{l}{2}}2Ra_3^3e_1(a_3^3)
+6\sigma e_1(\sigma)\\
=-\frac{8l\kappa_1\sigma}{(\sigma-\frac{l}{2})^2}R(a_3^3)^2-\frac{4lR^2}{(\sigma-\frac{l}{2})^2}a_2^3(a_3^3)^3
+\frac{2(5\sigma+\frac{3l}{2})(\sigma+\frac{l}{2})}{\sigma-\frac{l}{2}}Ra_3^3a_2^3
+12\kappa_1\sigma^2+6\sigma a_2^2a_3^3R,
\end{array}
\end{equation}
which can be simplified as
\begin{equation}\label{th30}
\begin{array}{lll}
\kappa_1^3=\kappa_1(7\sigma^2-\frac{l^2}{4})-\kappa_1(a_2^3)^2R-\frac{4\kappa_1\sigma
lR}{(\sigma-\frac{l}{2})^2}(a_3^3)^2-\frac{2l
R^2}{(\sigma-\frac{l}{2})^2}a_2^3(a_3^3)^3\\+
\frac{(5\sigma+\frac{3l}{2})(\sigma+\frac{l}{2})}{\sigma-\frac{l}{2}}Ra_2^3a_3^3+3\sigma a_2^3a_3^3R.
\end{array}
\end{equation}
Multiplying $a_3^3$ to both  sides of (\ref{th30}),
using the fact that $\kappa_1a_3^3=(\sigma-\frac{l}{2})a_2^3$,  $(a_2^3)^2+(a_3^3)^2=1$, and
simplifying the resulting equation gives
\begin{equation}\label{th31}
\begin{array}{lll}
\kappa_1^2=-\frac{2l
R^2}{(\sigma-\frac{l}{2})^3}(a_3^3)^4+\frac{(9\sigma^2-\frac{5\sigma l}{2}+
l^2)R}{(\sigma-\frac{l}{2})^2}(a_3^3)^2+7\sigma^2-\frac{l^2}{4}-R.
\end{array}
\end{equation}
Combining  (\ref{th28}) and  (\ref{th31}) yields
\begin{equation}\label{th32}
\begin{array}{lll}
-\frac{2l R^2}{(\sigma-\frac{l}{2})^3}(a_3^3)^4+\frac{(4\sigma^2-\frac{3\sigma l}{2}+
\frac{7l^2}{4})R}{(\sigma-\frac{l}{2})^2}(a_3^3)^2+2(2\sigma^2+\frac{l^2}{4})=0.
\end{array}
\end{equation}
Adding a $ (5\sigma+\frac{3l}{2})(\sigma-\frac{l}{2})^2$ multiple of (\ref{th32}) to a $(2lR)$ multiple of
(\ref{th29}) and simplifying the results yields

\begin{equation}\label{th33}
\begin{array}{lll}
\left((5\sigma+\frac{3l}{2})(4\sigma^2-\frac{3\sigma l}{2}+\frac{7l^2}{4})+2l(4\sigma^2-\sigma
l-\frac{l^2}{2}-R)\right)R(a_3^3)^2\\=2(\sigma-\frac{l}{2})^2\left(-(5\sigma+\frac{3l}{2})(2\sigma^2+\frac{l^2}{4})+lR\right).
\end{array}
\end{equation}
Similarly, adding a $ (\sigma-\frac{l}{2})^2$ multiple of  (\ref{th32}) to a $2(2\sigma^2+\frac{l^2}{4})$  multiple (\ref{th29}) and simplifying the results  gives
\begin{equation}\label{th34}
\begin{array}{lll}
(\sigma-\frac{l}{2})\left((4\sigma^2-\frac{3\sigma l}{2}+\frac{7l^2}{4})R+2(4\sigma^2-\sigma
l-\frac{l^2}{2}-R)(2\sigma^2+\frac{l^2}{4})\right)\\=2\left(-(5\sigma+\frac{3l}{2})(2\sigma^2+\frac{l^2}{4})+lR\right)R(a_3^3)^2.
\end{array}
\end{equation}
Combining (\ref{th33}) and  (\ref{th34}) to eliminate $a_3^3$ we obtain
\begin{equation}\label{th35}
\begin{array}{lll}
\left((5\sigma+\frac{3l}{2})(4\sigma^2-\frac{3\sigma l}{2}+\frac{7l^2}{4})+2l(4\sigma^2-\sigma
l-\frac{l^2}{2}-R)\right)\\
\times \left((4\sigma^2-\frac{3\sigma l}{2}+\frac{7l^2}{4})R+2(4\sigma^2-\sigma
l-\frac{l^2}{2}-R)(2\sigma^2+\frac{l^2}{4})\right)\\
=4(\sigma-\frac{l}{2})\left(-(5\sigma+\frac{3l}{2})(2\sigma^2+\frac{l^2}{4})+lR\right)^2.
\end{array}
\end{equation}
Noting that (\ref{th35}) can be written as $80 \sigma^7+ p(\sigma)=0$, where $p(\sigma)$ denotes a polynomial in $\sigma$ of degree less $\le 6$.

It follows that $\sigma$ is a constant if it exists. This, together with (\ref{th32}) and (\ref{th31}), implies that $a_3^3$ and $\kappa_1$ are  constants, and hence $a_2^3=\pm\sqrt{1-(a_3^3)^2}$ is also constant.  Using these, the 1st equation of
(\ref{thb2}), and $a_3^3\neq0$, we have $\sigma=-\frac{l}{2}$, which completes the proof of Claim II.

Clearly, Claim II contradicts  $\sigma\neq - \frac{l}{2}$ in Claim I.  Thus, we conclude that there exists no proper biharmonic Riemannian submersion in the case of $\nabla_{e_2}e_2\ne 0$ and $a_3^3\neq0, \pm1$.\\
Summarizing all above results, we obtain the theorem.
 \end{proof}
 
  Using Theorem 3.3 in \cite{WO} and  Theorem \ref{Th1} above, we immediately have
\begin{corollary} \label{Coro4}
 A Riemannian submersion from $S^3$, $\r^3$, or, $S^2 \times\r$, or, $SU(2)$, or, Nil space $($Heisenberg space$)$  to a surface is biharmonic if and only if it is harmonic.
\end{corollary}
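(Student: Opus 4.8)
The plan is to read the corollary off the two classification theorems already in hand, Theorem \ref{Th1} and Theorem 3.3 of \cite{WO}, so that the entire argument reduces to matching each of the five named geometries to its range of BCV parameters $(m,l)$ and checking that none of them is one of the two exceptional domains $H^2(4m)\times\r$ or $\widetilde{SL}(2,\r)$. First I would dispose of the trivial implication: since every harmonic map is biharmonic, ``harmonic $\Rightarrow$ biharmonic'' is automatic, and it remains only to show that from each of the five listed spaces there is \emph{no proper} biharmonic Riemannian submersion onto a surface, which then forces every biharmonic such submersion to be harmonic.

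For the converse I would organize the five geometries by the sign of the BCV invariant $R=4m-l^2$. The spaces $\r^3$ (with $m=l=0$) and $S^3$ (with $4m=l^2,\ m>0$) satisfy $R=0$ and are exactly the $3$-dimensional space forms that lie outside the scope of Theorem \ref{Th1}; for them I would quote Theorem 3.3 of \cite{WO}, which already guarantees that a biharmonic Riemannian submersion from a $3$-dimensional space form onto a surface is harmonic. A round ($4m=l^2$) presentation of $SU(2)$ reduces to this same case.

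The remaining geometries satisfy $R\neq0$, so Theorem \ref{Th1} applies directly, and the decisive point is a single sign check. Recording their parameters, $S^2\times\r$ has $l=0,\ m>0$, the Heisenberg (Nil) space has $m=0,\ l\neq0$, and the non-round $SU(2)$ has $m>0,\ l\neq0$; in particular each of these three has $m\geq0$. By contrast the two exceptional domains permitted by Theorem \ref{Th1} both require $m<0$, since $H^2(4m)\times\r$ corresponds to $l=0,\ m<0$ and $\widetilde{SL}(2,\r)$ to $l\neq0,\ m<0$. Hence none of $S^2\times\r$, Nil, or $SU(2)$ can serve as the domain of a proper biharmonic Riemannian submersion, so on each of them biharmonicity is equivalent to harmonicity.

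The one step I would guard most carefully---and effectively the only content beyond citing the two theorems---is the parameter dictionary identifying each named space with its admissible $(m,l)$, because the whole proof hinges on the clean dichotomy that the five listed geometries all have $m\geq0$ whereas the two exceptional domains have $m<0$. No analytic estimate is needed, all such work having been absorbed into Theorem \ref{Th1} and Theorem 3.3 of \cite{WO}; assembling the $R=0$ space-form cases with the three $R\neq0$ cases then establishes the corollary for all five spaces at once.
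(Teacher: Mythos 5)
Your proposal is correct and follows exactly the paper's route: the paper derives this corollary immediately from Theorem 3.3 of \cite{WO} (covering the space-form cases $R=4m-l^2=0$, i.e.\ $\r^3$, $S^3$, and round $SU(2)$) together with Theorem \ref{Th1} (covering $R\neq0$), the only implicit content being the parameter dictionary you spell out, namely that $S^2\times\r$, Nil, and non-round $SU(2)$ all have $m\geq0$ while the two exceptional targets $H^2(4m)\times\r$ and $\widetilde{SL}(2,\r)$ require $m<0$.
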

From the proof of Theorem  \ref{Th1}, if $a_1^3=f_1=0$, $a_3^3=\langle e_3, E_3\rangle\neq0, \pm1$  and the integrability data $f_2\neq0$,  then there exists no biharmonic Riemannian submersion from BCV 3-spaces.
\begin{example}\label{ex0}
The Riemannian submersion from Heisenberg group (Nil space)
\begin{align}\notag
\pi  : (\r^3 , {\rm d}x^{2}+{\rm
d}y^{2}+({\rm d}z+\frac{1}{2}(y{\rm d}x-x{\rm d}y)^{2})) &\to (\r^2 ,dx^2 + (1+x^2)^{-1}dt^2)
\\\notag
\pi(x,y,z) =(x,z+\frac{xy}{2})
\end{align}
is not a biharmonic map.
\end{example}
For $m=0\;{\rm and}\;l=1$, the BCV 3-space is a Heisenberg group (Nil space): $(\r^3 , g={\rm d}x^{2}+{\rm
d}y^{2}+\left({\rm d}z+\frac{1}{2}(y{\rm d}x-x{\rm d}y\right)^{2})$ with an  orthonormal frame $\{E_1=\frac{\partial}{\partial
x}-\frac{y}{2}\frac{\partial}{\partial z},\;E_2=\frac{\partial}{\partial
y}+\frac{x}{2}\frac{\partial}{\partial z}, \;
E_3=\frac{\partial}{\partial z}\}$. We can check that $e_1=E_1,\;e_2=-\frac{x}{\sqrt{1+x^2}}E_2-\frac{1}{\sqrt{1+x^2}}E_3, \;
e_3=\frac{1}{\sqrt{1+x^2}}E_2-\frac{x}{\sqrt{1+x^2}}E_3$ form an
orthonormal frame on Nil space adapted to the Riemannian submersion with $d\pi(e_1)=\varepsilon_1,\;d\pi(e_2)=\varepsilon_2$, where $\varepsilon_1=\frac{\partial}{\partial x},\;\varepsilon_2=-\sqrt{1+x^2}\frac{\partial}{\partial t}$ and $e_3$ being vertical. Using (\ref{BCV1}), the Lie brackets given by
\begin{align*}
[e_1,e_2]=\frac{x}{1+x^2}e_2-\frac{1-x^2}{1+x^2}e_3,\;
[e_1,e_3]=-\frac{x}{1+x^2}e_3,\; \;[e_2,e_3]=0,
\end{align*}
from which we obtain the integrability data of the Riemannian
submersion $\pi$ as
$f_1=0,\;f_2=\frac{x}{1+x^2},\;\;\kappa_1=-\frac{x}{1+x^2},\;\;\sigma=\frac{1-x^2}{2(1+x^2)},\;\;\kappa_2=0$. It is clear to find that $a_1^3=0,\;f_1=0, a_3^3=\langle e_3, E_3\rangle=-\frac{x}{\sqrt{1+x^2}}\neq0,\pm1$ and $f_2=\frac{x}{1+x^2}\not\equiv0$.  Applying Theorem \ref{Th1}, the Riemannian submersion
$\pi$ is not biharmonic.\\

\section{Some examples of proper biharmonic Riemannian submersions from $H^2\times\r$ and
$\widetilde{SL}(2,\r)$}
Theorem \ref{Th1} shows that  a proper biharmonic Riemannian submersion $\pi:M^3_{m,\;l}\to (N^2,h)$
from a BCV 3-space exists only in the cases of $H^2\times\r
\to \r^2$\;or\; $\widetilde{SL}(2,\r)\to\r^2$. In this section, we
construct many examples of proper biharmonic Riemannian submersions from $H^2\times\r$ and
$\widetilde{SL}(2,\r)$.\\

\begin{proposition}\label{Pr1}
The Riemannian submersion
\begin{align}\label{pr1}\notag
\pi:(\r^{3}_{+},\frac{a^2(dx^2+dy^2)}{y^2}+(dz+\frac{ba}{y} dx)^2)
&\to (\r^2 ,\frac{a^2dy^2}{y^2} + \frac{dz^2}{1+b^2}),\; \pi(x,y,z) =(y,z)
\end{align}
is a proper biharmonic map, where $a>0,\; b\in \r$ are constants and $\r^{3}_{+}=\{(x,y,z)\in \r^{3}, y>0\}$.
\end{proposition}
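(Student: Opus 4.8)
The plan is to prove biharmonicity by producing an explicit adapted orthonormal frame for $\pi$, reading off its integrability data, and then checking directly that these data satisfy the biharmonic system of Lemma \ref{Lem1} while yielding a nonvanishing tension field. First I would build the frame. Writing $g=\frac{a^2}{y^2}(dx^2+dy^2)+(dz+\frac{ba}{y}dx)^2$, the fibres of $\pi$ are the $x$-curves, so the unit vertical field is $e_3=\frac{y}{a\sqrt{1+b^2}}\partial_x$. A short computation shows that $\partial_y$ is already horizontal, whereas the horizontal lift of $\partial_z$ must be corrected by a $\partial_x$-term because of the off-diagonal term $\frac{ba}{y}\,dx\,dz$ in $g$. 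Concretely I would set
$$e_1=\frac{y}{a}\partial_y,\qquad e_2=\sqrt{1+b^2}\,\partial_z-\frac{by}{a\sqrt{1+b^2}}\,\partial_x,\qquad e_3=\frac{y}{a\sqrt{1+b^2}}\,\partial_x,$$
and verify that this is an orthonormal frame with $e_3$ vertical, and that $e_1,e_2$ project to the orthonormal base frame $\eta_1=\frac{Y}{a}\partial_Y$, $\eta_2=\sqrt{1+b^2}\,\partial_Z$ of $(\r^2,\frac{a^2dY^2}{Y^2}+\frac{dZ^2}{1+b^2})$. This simultaneously confirms that $\pi$ is a Riemannian submersion and that $\{e_1,e_2,e_3\}$ is an adapted frame.

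Next I would compute the three Lie brackets, which I expect to give $[e_1,e_3]=\frac{1}{a}e_3$, $[e_2,e_3]=0$, and $[e_1,e_2]=-\frac{b}{a}e_3$. Comparing with (\ref{PR0}) (with $f_3=0$, since the frame is adapted) reads off the integrability data
$$f_1=f_2=0,\qquad \kappa_1=\tfrac1a,\qquad \kappa_2=0,\qquad \sigma=\tfrac{b}{2a},$$
all of which are \emph{constants}. In particular $\kappa_1=1/a\neq0$, and formula (\ref{GCB1}) gives $K^N=e_1(f_2)-e_2(f_1)-f_1^2-f_2^2=0$, so the base is flat, consistent with Theorem \ref{Th1}.

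Finally I would substitute into Lemma \ref{Lem1}. Since $f_1=f_2=0$, $\kappa_2=0$ and $K^N=0$, the second equation of (\ref{lem1}) holds identically, while the first collapses to $\Delta^M\kappa_1=0$; as $\kappa_1=1/a$ is constant this is automatic, so $\pi$ is biharmonic. For properness I would use that the tension field of a Riemannian submersion is $\tau(\pi)=-d\pi(\nabla^M_{e_3}e_3)$, and by (\ref{R2}) one has $\nabla^M_{e_3}e_3=\kappa_1e_1+\kappa_2e_2$, whence $\tau(\pi)=-\kappa_1\eta_1=-\frac1a\eta_1\neq0$. Hence $\pi$ is proper biharmonic (and note $b=0$ gives $\sigma=0$, the $H^2\times\r$ case, while $b\neq0$ gives the $\widetilde{SL}(2,\r)$ case). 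The only genuinely delicate step is the first one: correctly identifying the horizontal lift of $\partial_z$ in the presence of the cross term in $g$. Once the frame is pinned down, the whole argument is short because $f_1,f_2,\kappa_1,\kappa_2$ all turn out to be constants, so the biharmonic system trivializes.
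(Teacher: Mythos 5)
Your proposal is correct and follows essentially the same route as the paper: the identical adapted frame $\{e_1=\frac{y}{a}\partial_y,\ e_2=\sqrt{1+b^2}\,\partial_z-\frac{by}{a\sqrt{1+b^2}}\partial_x,\ e_3=\frac{y}{a\sqrt{1+b^2}}\partial_x\}$, the same Lie brackets and integrability data $f_1=f_2=\kappa_2=0$, $\kappa_1=\frac1a$, $\sigma=\frac{b}{2a}$, and the same substitution into Lemma \ref{Lem1} with $K^N=0$. Your explicit verification of properness via $\tau(\pi)=-d\pi(\nabla^M_{e_3}e_3)=-\frac1a\eta_1\neq0$ is a welcome detail that the paper leaves implicit (there, nonharmonicity follows from $\kappa_1\neq0$), but it is not a different method.
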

\begin{proof}
It is not difficult to check that the orthonormal frame
$\{e_1=\frac{y}{a}\frac{\partial}{\partial y},\;e_2= \sqrt{1+b^2}\frac{\partial}{\partial
z}-\frac{by}{a\sqrt{1+b^2}}\frac{\partial}{\partial
x}, \;e_3=\frac{y}{a\sqrt{1+b^2}} \frac{\partial}{\partial x}\}$  on
$(\r^{3}_{+},\frac{a^2}{y^2}(dx^2+dy^2)+(dz+b\frac{a}{y} dx)^2)$ is
adapted to the Riemannian submersion $\pi$ with $
d\pi(e_i)=\varepsilon_i, i=1, 2$ and $e_3$ being vertical, where
$\varepsilon_1=\frac{y}{a}\frac{\partial}{\partial y},\;\varepsilon_2=
\sqrt{1+b^2}\frac{\partial}{\partial
z},\;$ form an orthonormal frame on the
base space $(\r^2 ,\frac{a^2}{y^2}dy^2 + \frac{1}{1+b^2}dz^2)$. It is easy to compute the Lie brackets and the associated the integrability data as follows
\begin{equation}\notag
\begin{array}{lll}
[e_1,e_3]=\frac{1}{a}\; e_3,\;[e_2,e_3]=0 ,\;\; [e_1,e_2]=-\frac{b}{a}e_3,\;
f_1=f_2=\kappa_2=0,\;\kappa_1=\frac{1}{a},\;\sigma=\frac{b}{2a}.
\end{array}
\end{equation}
Substituting these and the curvature $K^{\r^2}=0$ into
biharmonic Equation (\ref{lem1}) we conclude that the Riemannian submersion
$\pi$ is a proper biharmonic map.  From which, we obtain the
proposition.
\end{proof}
It is known that the product space $H^2\times\r$ can be identified with $(\r^{3}_{+},\frac{1}{y^2}(dx^2+dy^2)+dz^2)$, or, $(\r^{3},e^{\frac{2y}{a}}dx^2+dy^2+dz^2)$, or, $(\r^{3},\frac{dx^2+dy^2}{(1-\frac{1}{4a^2}(x^2+y^2))^2}+dz^2)$. As an application of Proposition \ref{Pr1}, for $b=0$, we have

\begin{corollary}
For constant $a>0$, the Riemannian submersion
\begin{align}\label{ex0}
\pi^a :H^2(-\frac{1}{a^2})\times\r=&(\r^{3}_{+},\frac{a^2(dx^2+dy^2)}{y^2}+dz^2)
\to (\r^2 ,\frac{a^2dy^2}{y^2} + dz^2),\; \\\notag & \pi^a(x,y,z) =(y,z)
\end{align}
is a proper biharmonic map from $H^2(-\frac{1}{a^2})\times\r$ to Euclidean space $\r^2$. In particular, for $a=1$, the
Riemannian submersion
\begin{align}\notag
\pi^{1}
:H^2(-1)\times\r=(\r^{3}_{+},\frac{dx^2+dy^2}{y^2}+dz^2)
&\to (\r^2 ,\frac{dy^2}{y^2} + dz^2),\; \notag \pi^1(x,y,z) =(y,z)
\end{align}
is a proper biharmonic map.
\end{corollary}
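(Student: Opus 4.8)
The plan is to obtain this corollary as the special case $b=0$ of Proposition \ref{Pr1}, which already establishes proper biharmonicity for the entire one-parameter family indexed by $b$. First I would simply set $b=0$ throughout that proposition: the source metric $\frac{a^2(dx^2+dy^2)}{y^2}+(dz+\frac{ba}{y}dx)^2$ loses its twisting term and becomes $\frac{a^2(dx^2+dy^2)}{y^2}+dz^2$; the base metric $\frac{a^2dy^2}{y^2}+\frac{dz^2}{1+b^2}$ becomes $\frac{a^2dy^2}{y^2}+dz^2$; and the projection $\pi(x,y,z)=(y,z)$ is unchanged. Hence the submersion $\pi^a$ of the corollary is \emph{literally} the $b=0$ member of the family in Proposition \ref{Pr1}, so it is a proper biharmonic map by that proposition. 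Concretely, the integrability data specialize to $f_1=f_2=\kappa_2=\sigma=0$ and $\kappa_1=\frac1a\neq0$; with $K^N=0$ the first biharmonic equation in (\ref{lem1}) collapses to $\Delta^M\kappa_1=0$, which holds since $\kappa_1$ is constant, while the nonvanishing of $\kappa_1$ forbids harmonicity and thus yields properness.

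Second I would justify the two identifications of the domain and codomain with the stated model spaces. For the source, the horizontal factor $\frac{a^2(dx^2+dy^2)}{y^2}$ is $a^2$ times the standard upper-half-plane metric $\frac{dx^2+dy^2}{y^2}$ of Gauss curvature $-1$; since a constant rescaling $g\mapsto a^2 g$ divides sectional curvature by $a^2$, this factor is the hyperbolic plane $H^2(-\frac1{a^2})$, and the orthogonal $dz^2$ term displays the total space as the Riemannian product $H^2(-\frac1{a^2})\times\r$. The base metric $\frac{a^2dy^2}{y^2}+dz^2$ is flat, consistently with the value $K^N=0$ used above, hence isometric to an open subset of Euclidean $\r^2$. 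This records $\pi^a$ as a map $H^2(-\frac1{a^2})\times\r\to\r^2$, exactly as claimed.

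Finally, the case $a=1$ is immediate: the source metric becomes $\frac{dx^2+dy^2}{y^2}+dz^2=H^2(-1)\times\r$ and the base becomes $\frac{dy^2}{y^2}+dz^2$, flat as above, so $\pi^1$ is proper biharmonic by the general statement. There is essentially no analytic obstacle, since all the genuine work is already carried out in Proposition \ref{Pr1}; the only points needing a line of care are the curvature bookkeeping in the identification $\frac{a^2(dx^2+dy^2)}{y^2}\cong H^2(-\frac1{a^2})$ and the check that the base metric $\frac{a^2dy^2}{y^2}+dz^2$ is genuinely flat, both of which are short direct computations.
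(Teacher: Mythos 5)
Your proposal is correct and follows exactly the paper's own route: the paper obtains this corollary precisely as the $b=0$ specialization of Proposition \ref{Pr1}, which is what you do. Your additional checks (the integrability data reducing to $f_1=f_2=\kappa_2=\sigma=0$, $\kappa_1=\frac{1}{a}$, the flatness of the base metric, and the curvature identification $\frac{a^2(dx^2+dy^2)}{y^2}\cong H^2(-\frac{1}{a^2})$) are sound and merely make explicit what the paper leaves implicit.
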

We can construct proper  biharmonic  Riemannian submersion from the other model  $(\r^{3},g=e^{2\sqrt{-4m}y}dx^2+dy^2+dz^2)$ (which is  actually $H^2(4m)\times\r$)

\begin{example}\label{Ex1}
For $m<0$, the Riemannian submersion
\begin{align}\label{ex1}\notag
\pi_{4m} :H^2(4m)\times\r=&(\r^{3},e^{2\sqrt{-4m}y}dx^2+dy^2+dz^2)
\to (\r^2,dy^2 + dz^2),\; \\ & \pi _{4m}(x,y,z) =(y,z)
\end{align}
is a proper biharmonic map from $H^2(4m)\times\r$ to a Euclidean space
$\r^2$. In particular, for $m=-\frac{1}{4}$, the Riemannian
submersion
\begin{align}\notag
\pi_{-1} :H^2(-1)\times\r=(\r^{3},e^{2y}dx^2+dy^2+dz^2) &\to
(\r^2 ,dy^2 + dz^2),\; \notag \\ \pi_{-1}(x,y,z) =(y,z)
\end{align}
is a proper biharmonic map.
\end{example}
In fact, it is easy to check that the orthonormal frame
$\{e_1=\frac{\partial}{\partial y},\;e_2= \frac{\partial}{\partial
z}, \;e_3=e^{-\sqrt{-4m}y} \frac{\partial}{\partial x}\}$  on
$(\r^{3},g=e^{2\sqrt{-4m}y}dx^2+dy^2+dz^2)$ is adapted to the
Riemannian submersion $\pi$ with $ d\pi(e_i)=\varepsilon_i, i=1, 2$
and $e_3$ being vertical, where
$\varepsilon_1=\frac{\partial}{\partial y},\;\varepsilon_2=
\frac{\partial}{\partial z},\;$ form an orthonormal frame on the
base space $(\r^2 , dy^2 + dz^2)$. A straightforward
computation gives the Lie brackets and the associated the integrability data as
\begin{equation}\notag
\begin{array}{lll}
[e_1,e_3]=-\sqrt{-4m}\; e_3,\;[e_2,e_3]=0 ,\;\; [e_1,e_2]=0,\;
f_1=f_2=\sigma=\kappa_2=0,\;\kappa_1=-\sqrt{-4m}.
\end{array}
\end{equation}
Substituting these and the curvature $K^{\r^2}=0$ into
(\ref{lem1}) we conclude that the Riemannian submersion
$\pi_{4m}$ is a proper biharmonic map. From which, we obtain the
example.\\

Following \cite{BW1} (page 301) we identify  $\widetilde{SL}(2,\r)$ with $(\r^{3}_{+},\frac{dx^2+dy^2}{y^2}+(dz+\frac{1}{y} dx)^2)$ and use Proposition \ref{Pr1} with  $a=b=1$ to have
\begin{corollary}\label{Co}
The Riemannian submersion
\begin{align}\label{coro2}
\pi:\widetilde{SL}(2,\r)=(\r^{3}_{+},\frac{dx^2+dy^2}{y^2}+(dz+\frac{dx}{y} )^2)
&\to (\r^2 ,\frac{dy^2}{y^2} +\frac{dz^2}{2}),\; \notag \pi(x,y,z) =(y,z)
\end{align}
is a proper biharmonic map  from  $\widetilde{SL}(2,\r)$ to a Euclidean space $\r^2$ .
\end{corollary}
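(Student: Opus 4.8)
The plan is to obtain this corollary as the single-parameter specialization $a=b=1$ of Proposition~\ref{Pr1}, so that essentially no new computation is required beyond matching metrics. First I would invoke the identification, taken from \cite{BW1} (page 301), of $\widetilde{SL}(2,\r)$ with the upper half-space $\r^3_+$ carrying the metric
\begin{equation}\notag
\frac{dx^2+dy^2}{y^2}+\Big(dz+\frac{1}{y}\,dx\Big)^2.
\end{equation}
This is precisely the model on which the corollary is phrased, and it corresponds in the BCV family to the parameter range $m<0$, $l\neq 0$ that singles out $\widetilde{SL}(2,\r)$.

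Next I would set $a=b=1$ in Proposition~\ref{Pr1} and simply verify that each ingredient of the corollary is recovered verbatim. The domain metric $\frac{a^2(dx^2+dy^2)}{y^2}+(dz+\frac{ba}{y}dx)^2$ becomes $\frac{dx^2+dy^2}{y^2}+(dz+\frac{1}{y}dx)^2$, which is exactly the $\widetilde{SL}(2,\r)$ model above; the base metric $\frac{a^2dy^2}{y^2}+\frac{dz^2}{1+b^2}$ becomes $\frac{dy^2}{y^2}+\frac{dz^2}{2}$; and the submersion $\pi(x,y,z)=(y,z)$ is unchanged. Since Proposition~\ref{Pr1} already establishes that $\pi$ is a proper biharmonic map for \emph{every} $a>0$ and $b\in\r$ (via the adapted frame whose integrability data satisfy $f_1=f_2=\kappa_2=0$, $\kappa_1=1/a$, $\sigma=b/(2a)$ together with $K^{\r^2}=0$ in the biharmonic equation \eqref{lem1}), the proper biharmonicity of the map in the corollary follows at once.

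There is no genuine obstacle in this argument; the only point requiring care is that the stated upper half-space metric really is a model of $\widetilde{SL}(2,\r)$ rather than of some other BCV geometry, which is exactly the content of the cited identification in \cite{BW1}. Granting that, the corollary is an immediate instance of the proposition, and in particular it realizes the second of the two cases, $\widetilde{SL}(2,\r)\to\r^2$, predicted by Theorem~\ref{Th1}.
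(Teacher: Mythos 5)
Your proposal is correct and coincides with the paper's own proof: the authors likewise cite the identification of $\widetilde{SL}(2,\r)$ with $(\r^{3}_{+},\frac{dx^2+dy^2}{y^2}+(dz+\frac{1}{y}dx)^2)$ from \cite{BW1} (page 301) and then apply Proposition~\ref{Pr1} with $a=b=1$. No further commentary is needed.
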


\noindent{\bf Declarations:}\\

1. Funding: Ze-Ping Wang was supported by the Natural Science Foundation of China (No. 11861022), Y.-L. Ou was supported by a grant from the Simons Foundation ($\#427231$, Ye-Lin Ou).\\

2. Conflict of interest: The authors declare that they have no conflict of interest.

\end{document}